\documentclass[11pt,a4paper,oneside,onecolumn,fleqn]{article}
\usepackage{mathrsfs}
\usepackage{amsfonts}
\usepackage{longtable}
\usepackage{mathtools}
\usepackage{colortbl}
\usepackage{latexsym}
\usepackage{amsmath,amsthm}
\usepackage{epsf}
\usepackage{graphicx}
\usepackage{indentfirst}
\usepackage{cite}
\usepackage[numbers,sort&compress]{natbib}
\usepackage{caption2}

\pagestyle{plain} \headsep 0pt \topmargin 0pt \headheight 0pt
\oddsidemargin 0pt \evensidemargin 0pt \textheight 23 true cm
\textwidth 16 true cm \parskip 0mm
\parindent 20pt
\baselineskip 20pt

\renewcommand{\arraystretch}{1}

\theoremstyle{plain}
\newtheorem{thm}{\bf Theorem}[section]
\newtheorem{con}[thm]{\bf Construction}
\newtheorem{cor}[thm]{\bf Corollary}

\newtheorem{lem}[thm]{\bf Lemma}

\setlength{\bibsep}{0ex}

\theoremstyle{definition}

\theoremstyle{remark}


\title{\bf A family of  multimagic squares based on large sets of orthogonal arrays
}
\author
 {   {  \small  \  Yong Zhang$^{1}$,  \ Kejun Chen$^{2}$ } \\
         {\footnotesize  {\it 1.  School of Mathematical and Statistics, Yancheng Teachers University, Jiangsu 224002, China}}\\
          \footnotesize { \it 2.  Department of Mathematics, Nanjing Normal University of Special Education, Jiangsu 210038,   China}\\
          \footnotesize { \it 3. Department of Computer Science, Lakehead University, Thunder Bay, ON P7B 5E1, Canada}\\
  }
\date{}
\begin{document}

\maketitle
{\baselineskip 15pt
\begin {abstract}

\noindent
   Large set of orthogonal arrays (LOA) were introduced by D. R. Stinson, and it is also used to construct multimagic squares recently. In this paper,  multimagic squares based on strong double LOA are further investigated. It is proved that
 there exists an MS$(q^{2t-1},t)$ for any prime power $q\geq 2t-1$ with $t\geq3$, which provided a new family of multimagic squares.
\end{abstract}

{\footnotesize\textbf{Keywords:} Multimagic magic square, Orthogonal array, Large set.

\textbf{{AMS Classification\/}:} 05B15
}}

{\begingroup\makeatletter\let\@makefnmark\relax\footnotetext{$^*$Corresponding to: zyyctu@gmail.com, yzhang63@lakeheadu.ca  (Zhang),  yctuckj@163.com(Chen)}
{\begingroup\makeatletter\let\@makefnmark\relax\footnotetext{Supported by the Natural Science Foundation of China: No.11301457(Zhang),  No.11371308(Chen). }


\vskip 0.5cm
\section{Introduction}

 An $n\times n$ matrix $A$ consisting of $n^2$   integers is a {\it general magic square of order $n$} if the sum of $n$ elements in each row, each column and each of two   diagonals are the same.  The sum is the \emph{magic number}.   A general magic square of order $n$ is  a \emph{magic square}, denoted by  MS$(n)$,  if the entries  are $n^2$ consecutive integers.  Usually, the $(i,j)$ entry of a matrix $A$ is denoted by $a_{i,j}$.    A lot of work has been done on   magic squares (\cite{Abe,Andrews,Handbook}).

Let $t$ be a positive  integer.  A   general magic square $M$   is a \emph{general $t$-multimagic square}  if  $M^{*1}$,$M^{*2}$, $\cdots$, $M^{*t}$ are all general magic squares, where  $M^{*e}=(m_{i,j}^{e})$, $e=1,2,\dots,t$. A  general $t$-multimagic square of order $n$ is  a \emph{$t$-multimagic square}, denoted by  MS($n,t$), if the entries  are $n^2$ consecutive integers.
Usually, a $2$-multimagic square is called a \emph{bimagic square} and a $3$-multimagic square   is called a \emph{trimagic squares}.

In 2007, Derksen, Eggermont and  van den Essen  \cite{Derksen} provided a constructive procedure to make a large class of $t$-multimagic squares for each positive integer $t\ge 2$.  There are also some new families of bimagic suqares and trimagic squares, we refer the readers to \cite{Chen2,Zhangcms,Zhang}. In \cite{Zhang},  large set of orthogonal arrays was used to construct multimagic squares, and it is also used in this paper.

An \emph{orthogonal array} of \emph{size} $N$, with $k$ \emph{constraints}, $v$ \emph{levels}, and \emph{strength} $t$, denoted by OA$(N; t, k, v)$, is a $k \times N$ array with entries from a set of $v \geq 2$ symbols, having the property that in every $t \times N$ submatrix, every $t \times 1$ column vector appears the same number $\frac{N}{v^t}$   times.
An OA$(N; t, k, v)$   is also denoted by OA$_{\lambda}(t,k,v)$, where $\lambda = \frac{N}{v^t}$. The parameter $\lambda$ is the
\emph{index} of the orthogonal array. An orthogonal array is {\it simple} if it does not contain two identical columns.
 As is well known (see \cite{Handbook}),  orthogonal arrays  are of importance in design theory.

A {\it large set} of orthogonal arrays OA$(N; t, k, v)$, denoted by  LOA$(N; t, k, v)$, is   a set of $v^k/N$
simple arrays OA$(N; t, k, v)$ such that every possible $k$-tuple of symbols occurs as a column in
exactly one of the OAs in the set. Equivalently, the union of the OAs forms a  trivial  OA$(v^k;  t, k, v)$. Large sets of orthogonal arrays   have been used to construct resilient functions and zigzag functions by D. R. Stinson (\cite{Stinson2,zigzag2}).
A special LOA, strong double LOA was introduced in \cite{Zhang}  to construct $t$-multimagic squares.

 Suppose that there is  an LOA$(N; t, k, v)$, $A_s=(a_{i,j}^{(s)}), s\in I_{ \frac{v^k}{N}}$.  Let  $B_j$ be an array whose $s$-th column is the same as the $j$-th  column of $A_s$, i.e.,
 \begin{center}
$\begin{aligned}
 \mbox{}\hspace{1.35in}B_j=(b_{i,s}^{(j)}),\    b_{i,s}^{(j)}=a_{i,j}^{(s)},\ i\in I_k,\ s\in I_{\frac{v^k}{N}}, \ j\in I_{N}, \mbox{}\hspace{1.35in}
\end{aligned}$
\end{center}
 \noindent Then $\{A_0,A_1,\cdots, A_{\frac{v^k}{N}-1}\}$ is called a {\it    double } LOA$(N; t, k, v)$, denoted by  DLOA $(\frac{v^k}{N},N$; $t, k, v)$, if $B_0, B_1, \cdots, B_{N-1}$   form an  LOA$(\frac{v^k}{N}; t, k, v)$.
When $N=\frac{v^k}{N}$,  we denote an  DLOA $(\frac{v^k}{N},N; t, k,v)$ by
 DLOA$(N; t, k, v)$ in short.
  Further, a  DLOA $(N; t, k, v)$, $\{A_0$, $A_1$, $\cdots$, $A_{N-1}\}$ is called {\it strong}, denoted by SDLOA $(N; t, k, v)$, if $D, D'$ are both OA$(N; t, k, v)$s, where $D$ and $D'$ are two arrays whose $j$-th columns are the same as $j$-th and $(N-1-j)$-th
 column of $A_j$, respectively, i.e.,
  \begin{center}
$\begin{aligned}
\mbox{}\hspace{1.55in}
&D=(d_{i,j})_{k\times N},  d_{i,j}=a_{i,j}^{(j)},  i\in I_k, j\in I_N, \mbox{}\hspace{1.75in}  \\
&D'=(d'_{i,j})_{k\times N}, d'_{i,j}=a_{i,N-1-j}^{(j)}, i\in I_k, j\in I_N. \mbox{}\hspace{1.37in}
\end{aligned}$
\end{center}

 We should mention that DLOA is a powerful tool to construct multimagic rectangles \cite{ZhangMR}. However, this paper mainly focus on multimagic squares. Zhang, Chen and Lei  investigated the  construction of multimagic squares based on SDLOA, and they proved the following.
\begin{lem} \emph{(\cite{Zhang})}\label{zhang}
There exists an MS$(q^{t},t)$ for all prime power $q \geq 2t-1$ with $t\geq2$.
 \end{lem}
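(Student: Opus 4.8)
The plan is to deduce the statement from the existence of a strong double large set SDLOA$(q^{t};t,2t,q)$, and then to construct such a large set by coset decompositions of MDS codes over $GF(q)$. First I would fix the passage from an SDLOA to the square. Since the desired square has order $n=q^{t}$ and its $n^{2}=q^{2t}$ entries must be consecutive integers, I encode each entry as a base-$q$ integer: writing the $2t$ symbols in the $j$-th column of the array $A_{s}$ as $a_{0,j}^{(s)},\dots,a_{2t-1,j}^{(s)}\in\{0,\dots,q-1\}$, I set $m_{s,j}=\sum_{i=0}^{2t-1}a_{i,j}^{(s)}q^{i}$. Because the large-set property says the union of the $A_{s}$ realises every $2t$-tuple over $GF(q)$ exactly once, the map $(s,j)\mapsto m_{s,j}$ is a bijection of the $q^{2t}$ cells onto $\{0,1,\dots,q^{2t}-1\}$, so $M=(m_{s,j})$ has consecutive entries; this also forces the choice $k=2t$, $v=q$, $N=q^{t}$.

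The second step is the verification that $M^{*e}$ is a general magic square for every $e\le t$. The key observation is that, for any orthogonal array of strength $t$ and any rows $i_{1},\dots,i_{e}$ with $e\le t$, the column sum $\sum_{\mathrm{col}}\prod_{l=1}^{e}a_{i_{l},\mathrm{col}}$ depends only on the multiplicity pattern of $\{i_{1},\dots,i_{e}\}$: if the distinct rows appear with multiplicities $\mu_{1},\dots,\mu_{m}$ ($m\le t$), strength $t$ makes every $m$-tuple of symbols occur equally often, so the sum equals $\frac{N}{q^{m}}\prod_{l}\bigl(\sum_{c=0}^{q-1}c^{\mu_{l}}\bigr)$, a quantity independent of which strength-$t$ array is used. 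Expanding $(m_{s,j})^{e}=\sum a_{i_{1},j}^{(s)}\cdots a_{i_{e},j}^{(s)}\,q^{i_{1}+\cdots+i_{e}}$ and applying this to the four families of arrays attached to the SDLOA then makes all the required sums coincide: the row sums of $M^{*e}$ come from the strength of the $A_{s}$, the column sums from the strength of the $B_{j}$ (the double property), and the two diagonal sums from the strength of $D$ and $D'$ (the strong property). Since each of these column sums equals the same multiplicity-pattern invariant, all row, column and diagonal sums of $M^{*e}$ are equal, and $M$ is the desired MS$(q^{t},t)$.

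It remains to build the SDLOA$(q^{t};t,2t,q)$, which is where the hypothesis $q\ge 2t-1$ enters. I would take an MDS code: a $t\times 2t$ generator matrix $G$ over $GF(q)$ whose columns are $2t$ points of the normal rational curve $(1,\theta,\dots,\theta^{t-1})$, with $\theta$ ranging over $GF(q)$ together with the point at infinity $(0,\dots,0,1)$. These $q+1$ points have the property that any $t$ of them are linearly independent, so $2t$ of them can be selected precisely when $2t\le q+1$, i.e. $q\ge 2t-1$; the resulting code $C=\mathrm{rowspace}(G)$ and every coset of it is an OA$(q^{t};t,2t,q)$ of index unity, and the cosets form the large set. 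To order the columns and index the arrays I would choose a second MDS generator $G'$ with the stacked matrix $\binom{G}{G'}$ invertible, so that $GF(q)^{2t}=C\oplus\mathrm{rowspace}(G')$, and set the $x$-th column of $A_{y}$ equal to $xG+yG'$ for $x,y\in GF(q)^{t}$. Then $B_{x}$ is the coset $xG+\mathrm{rowspace}(G')$, which is an OA exactly because $G'$ is MDS, yielding the double property; writing the cell index $N-1-j$ as an affine involution $x\mapsto c-x$ of $GF(q)^{t}$, the diagonal array $D$ becomes $\mathrm{rowspace}(G+G')$ and $D'$ becomes a coset of $\mathrm{rowspace}(G-G')$, so the strong property reduces to the statement that $G+G'$ and $G-G'$ are also MDS.

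The main obstacle is thus purely a coding-theoretic one: arranging a single pair $(G,G')$ for which all four matrices $G$, $G'$, $G+G'$ and $G-G'$ are MDS generators simultaneously, while $\binom{G}{G'}$ is invertible and the digit-complement $j\mapsto q^{2t}-1-j$ agrees with a $GF(q)$-affine involution on the column index set. I expect this to be achievable by taking the columns of $G$ and $G'$ from two interleaved Vandermonde systems on disjoint blocks of curve parameters, so that the $t\times t$ minors of $G\pm G'$ reduce to nonvanishing (generalised) Vandermonde or Cauchy determinants; the condition $q\ge 2t-1$ should again be exactly what guarantees enough distinct parameters. The residual care needed is for the identification of $GF(q)$ with $\{0,\dots,q-1\}$ underlying the base-$q$ encoding, since the involution $c\mapsto q-1-c$ must be made compatible with the field structure (immediate for prime $q$, and requiring a fixed additive labelling in the prime-power case).
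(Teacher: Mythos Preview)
This lemma is quoted from \cite{Zhang} and is not re-proved in the present paper; however, the ingredients of that argument appear here as Construction~\ref{main2} and Lemma~\ref{OA1}, and the proof of Lemma~\ref{generate} displays the relevant trick. Your reduction of the problem to the existence of an SDLOA$(q^{t};t,2t,q)$, the power-sum argument via the multiplicity-pattern invariant, and the reformulation of the SDLOA as a pair of $t\times 2t$ MDS generators $G,G'$ with $\binom{G}{G'}$ invertible and $G\pm G'$ also MDS, are all correct and match the approach of \cite{Zhang} (in the paper's notation these are the $2t\times t$ matrices $E_{1}=G^{T}$, $E_{2}=G'^{T}$ of Lemma~\ref{OA1}).

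The genuine gap is at the last step, which you explicitly leave open: you only ``expect'' a suitable pair $(G,G')$ to exist, and the mechanism you propose---choosing the columns of $G$ and of $G'$ from \emph{disjoint} blocks of parameters on the normal rational curve---does not work as stated, because the sum of two distinct curve points $(1,\theta,\dots,\theta^{t-1})^{T}+(1,\varphi,\dots,\varphi^{t-1})^{T}$ is no longer a curve point and the $t\times t$ minors of $G\pm G'$ acquire no Vandermonde or Cauchy structure to exploit. The device you are missing is much simpler and needs no second block of parameters: take $G'$ to be a \emph{column rescaling} of the very same $G$, multiplying the first $t$ columns by one scalar $a$ and the last $t$ columns by a different scalar $b$. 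Then $G'$, $G+G'$ and $G-G'$ are again column rescalings of $G$, hence automatically MDS provided $a,b,1\pm a,1\pm b$ are all nonzero; and $\binom{G}{G'}$ is invertible because the two $t\times t$ halves of $G$ are nonsingular and $a\ne b$. The hypothesis $q\ge 2t-1$ is then used only once, to place $2t$ points on the curve for $G$ itself; suitable $a,b$ exist for all but the smallest $q$, with the residual edge case handled by an ad~hoc matrix. This is exactly the construction visible in the proof of Lemma~\ref{generate} (there with $a=x$, $b=x^{2}$ for a primitive $x$), transposed.
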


In this paper, a further result of  multimagic squares based on SDLOA  is obtained as follows.

\begin{thm}\label{thm-q2t-1}\label{q2t-1}
There exists an MS$(q^{2t-1},t)$ for any prime power $q\geq 2t-1$ with $t\geq3$.
\end{thm}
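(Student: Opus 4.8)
The plan is to reduce Theorem~\ref{q2t-1} to the construction of a strong double large set of orthogonal arrays and then to build that set from Reed--Solomon-type MDS codes over $\mathbb{F}_{q}$. The construction of \cite{Zhang} behind Lemma~\ref{zhang} produces, from any SDLOA$(q^{s};t,2s,q)$, an MS$(q^{s},t)$ (the case $s=t$ being Lemma~\ref{zhang} itself); since $4t-2=2(2t-1)$ and $(q^{2t-1})^{2}=q^{4t-2}$, the target is again of the balanced shape $N=v^{k}/N$, so it suffices to produce an SDLOA$(q^{2t-1};t,4t-2,q)$ for every prime power $q\ge 2t-1$ with $t\ge 3$. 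I would realise every array involved as a union of cosets of an $\mathbb{F}_{q}$-linear code in $\mathbb{F}_{q}^{4t-2}$, so that \emph{strength $t$} becomes \emph{dual minimum distance at least $t+1$} and the large-set, double and strong conditions become incidence statements about a short list of such codes.

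Fix $2t$ distinct points of $\mathbb{F}_{q}\cup\{\infty\}$ (possible since $q+1\ge 2t$); let $M\subseteq\mathbb{F}_{q}^{2t}$ be the extended Reed--Solomon code of evaluations of polynomials of degree $<t$ — the $[2t,t,t+1]_{q}$ MDS code already underlying Lemma~\ref{zhang} — and let $U\subseteq\mathbb{F}_{q}^{2t-2}$ be the $[2t-2,t-1,t]_{q}$ MDS code of evaluations of polynomials of degree $<t-1$ at the first $2t-2$ of these points. With $L\colon\mathbb{F}_{q}^{2t}\to\mathbb{F}_{q}^{2t-2}$ the projection onto the first $2t-2$ coordinates, set
\[
W=\bigl\{(a,\,L(a)+u)\ :\ a\in M,\ u\in U\bigr\}\subseteq\mathbb{F}_{q}^{4t-2},\qquad\dim W=t+(t-1)=2t-1 .
\]
Its $q^{2t-1}$ cosets partition $\mathbb{F}_{q}^{4t-2}$, and each is an OA$(q^{2t-1};t,4t-2,q)$: for any $t$ coordinates, write $\alpha$ of them in the first block and $\beta=t-\alpha$ in the second; the case $\alpha=t$ uses the strength of $M$, the case $1\le\alpha\le t-1$ uses the strength $\beta\le t-1$ of $U$ together with the fact that $L$ reproduces the first $2t-2$ coordinates of $a$, and the case $\alpha=0$ uses that $M$, having dual distance $t+1>t$, restricts surjectively onto $\mathbb{F}_{q}^{t}$ on any $t$ coordinates and hence absorbs $U$. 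Equivalently $d(W^{\perp})\ge t+1$: writing $W^{\perp}=\{(\xi,\eta):\eta\in U^{\perp},\ \xi+L^{*}\eta\in M^{\perp}\}$ with $L^{*}$ the transpose of $L$ (so $L^{*}\eta=(\eta,0,0)$), a nonzero word of weight $\le t$ would force $\eta$ of weight $t$ and $\xi=0$, hence $(\eta,0,0)\in M^{\perp}$; but $M^{\perp}$ is MDS, so its shortening at the last two coordinates is a $[2t-2,t-2,t+1]_{q}$ code with no nonzero word of weight $t$. This is the only place the bound $q\ge 2t-1$ enters — to guarantee the MDS code $M$ — exactly as in Lemma~\ref{zhang}.

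For the double property I would take a complement $W'$ of $W$ of the same type, built from complementary MDS codes $M'\subseteq\mathbb{F}_{q}^{2t}$ and $U'\subseteq\mathbb{F}_{q}^{2t-2}$ (generalised Reed--Solomon codes obtained from $M,U$ by a degree shift): then $W\oplus W'=\mathbb{F}_{q}^{4t-2}$ and the same shortening argument gives $d(W'^{\perp})\ge t+1$. Index the $q^{2t-1}$ OAs of the large set by the elements of $W'$ and the columns inside each OA by the elements of $W$, through $\mathbb{F}_{q}$-linear isomorphisms $\phi\colon\mathbb{F}_{q}^{2t-1}\to W'$ and $\psi\colon\mathbb{F}_{q}^{2t-1}\to W$, so that the $j$-th column of the $\sigma$-th OA is $\phi(\sigma)+\psi(j)$; the transposed family $B_{0},\dots,B_{N-1}$ is then exactly the set of cosets of $W'$, a large set because $W'$ is good, so we have a DLOA. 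For the strong property, choose the identification of the index set with $\mathbb{F}_{q}^{2t-1}$ so that $j\mapsto N-1-j$ is an affine involution $\rho$; the two diagonal arrays are then $D=\{\phi(j)+\psi(j):j\}$ and, up to a fixed translation, $D'=\{\phi(\rho j)+\psi(j):j\}$, each a $(2t-1)$-dimensional subspace of $\mathbb{F}_{q}^{4t-2}$, and one needs $d(D^{\perp})\ge t+1$ and $d(D'^{\perp})\ge t+1$.

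The main obstacle is exactly this last step: making a single choice of the MDS blocks, the complement $W'$, and the identifications $\phi,\psi,\rho$ work simultaneously for all four codes $W,W',D,D'$, and in particular coping with the borderline field size $q=2t-1$, where $\mathbb{F}_{q}\cup\{\infty\}$ has only $2t$ points and the degree-shifted blocks must be arranged to avoid collisions — bookkeeping which is presumably also the source of the hypothesis $t\ge 3$. There is room to succeed because each of $W,W',D,D'$ is just a $[4t-2,2t-1]_{q}$ code for which we need only dual distance $t+1$, far below the MDS value $2t$; the strategy is to carry all four in the explicit ``$(a,L(a)+u)$'' form with suitable generalised Reed--Solomon blocks, so that in every case the dangerous low-weight dual words are killed by the same shortening-of-an-MDS-code argument, and then to verify that $\phi,\psi,\rho$ are mutually compatible. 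Once the SDLOA$(q^{2t-1};t,4t-2,q)$ is in hand, the construction of \cite{Zhang} delivers the desired MS$(q^{2t-1},t)$.
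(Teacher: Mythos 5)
Your proposal takes a genuinely different route from the paper, but it is not a proof: it has an acknowledged gap at precisely the hard step. You reduce the theorem to constructing an SDLOA$(q^{2t-1};t,4t-2,q)$ and then carefully verify only the easiest piece of that object, namely that the cosets of your code $W=\{(a,L(a)+u)\}$ form an LOA$(q^{2t-1};t,4t-2,q)$ (the strength-$t$/dual-distance argument for $W$ is fine). But the definition of an SDLOA imposes three further conditions — the transposed family $B_0,\dots,B_{N-1}$ must again be an LOA (the double property), and the two diagonal arrays $D$, $D'$ must be OAs of strength $t$ (the strong property). These depend not just on $W$ but on the complement $W'$ and on the specific bijections $\phi,\psi$ and the involution $\rho\colon j\mapsto N-1-j$, i.e.\ on all four codes $W,W',D,D'$ simultaneously having dual distance at least $t+1$; in the paper's language this is exactly the demand of Lemma \ref{OA1} that one exhibit a nonsingular $E=(E_1,E_2)$ with $E_1,E_2,E_1+E_2,E_1-E_2\in M_{(4t-2)\times(2t-1)}^{(t)}(F_q)$. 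You explicitly defer this ("the main obstacle is exactly this last step") and offer only a strategy, so the SDLOA — and hence the theorem — is not established by what you wrote.

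For comparison, the paper avoids building any SDLOA with $4t-2$ rows at all. It keeps the SDLOA machinery at the smaller parameters: Lemma \ref{zhang} already gives an MS$(q^t,t)$, and Section 3 constructs a $q^{t-1}$-CMS$(q^{t-1},t-1)$ from translated SDLOA$(q^{t-1};t-1,2t-2,q)$s via the matrix conditions of Lemmas \ref{OA2} and \ref{generate} (plus the explicit $9$-CMS$(9,2)$ for $q=3$, $t=2$); Construction \ref{conmscms} then multiplies these to get MS$(q^{2t-1},t)$. That recursive CMS step is what lets the paper get the exponent $2t-1$ with only the bound $q\ge 2t-1$, whereas your direct approach must fight the borderline case $q=2t-1$ head-on for a length-$(4t-2)$ ambient space. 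If you want to complete your route, the natural move is the paper's own trick from Lemma \ref{generate}: take $E_1\in M_{(4t-2)\times(2t-1)}^{(t)}(F_q)$ with both $(2t-1)\times(2t-1)$ blocks nonsingular and set $E_2$ to be $E_1$ with the two blocks scaled by $x$ and $x^2$ for a primitive $x$; but you would still have to prove such an $E_1$ exists for all $q\ge 2t-1$, which is a nontrivial claim you have not addressed. As it stands, the argument is incomplete.
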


 Some useful constructions of multimagic squares are reviewed in Section 2. A family of complementary $t$-multimagic squares are  given in Section 3.
  The proofs of  Theorem \ref{q2t-1} and some useful corollaries are provided in Section 4.

\section{Constructions of multimagic squares}

In this section, we review some useful construction of multimagic squares presented in  \cite{Derksen,Zhang,Zhangcms}.
The construction of  multimagic squares based on SDLOA is listed in the following.
\begin{con}\label{main2} \emph{\cite{Zhang}}
If there exists an SDLOA $(N; t, k, v)$, then there exists an MS$(N,t)$.
\end{con}
 Derksen et al \cite{Derksen}  proved the following product construction of multimagic squares.
  \begin{con} \emph{(\cite{Derksen})}\label{product}  \label{conproduct}
If there exists  an  MS$(m,t)$ and there exists  an  MS($n,t$), then there exists an MS$(mn,t)$.
\end{con}

 Complementary $t$-multimagic squares was introduced in \cite{Zhangcms}. Let $I_n=\{0,1,\cdots, n-1\}$. For an MS$(n,t)$ $A$, the magic number of $A^{*e}$ is denoted by $S_e(n)$, where $$S_e(n)=\frac{\sum_{k\in I_{n^2}}k^e}{n}, e=1,2,\cdots,t.$$

 Let $B_s=(b_{i,j}^{(s)})$, $s\in I_m$, and let $B_0, B_1, \cdots, B_{m-1}$ be $m$ MS($n,t$)s. Then $\{B_0$, $B_1$, $\cdots$, $B_{m-1}\}$ is called {\it a set of complementary $t$-multimagic squares}, denoted by $m$-CMS($n,t$), if the following three conditions are satisfied.
\vskip 5pt

 \noindent \qquad $(R1)$ \ \ \ \ \ \ \ \ \ \ \ \ \ \  \   $\sum\limits_{s\in I_m}\sum\limits_{j\in I_n}(b_{i,j}^{(s)})^{t+1}=mS_{t+1}(n),   i\in I_n$; \vskip 5pt

 \noindent \qquad $(R2)$ \ \ \ \ \ \ \ \ \ \ \ \ \ \  \   $\sum\limits_{s\in I_m}\sum\limits_{i\in I_n}(b_{i,j}^{(s)})^{t+1}=mS_{t+1}(n),   j\in I_n$; \vskip 5pt

 \noindent \qquad $(R3)$ \ \ \ \ \ \ \ \ \ \ \ \ \ \  \  $\sum\limits_{s\in I_m}\sum\limits_{i\in I_n}(b_{i,i}^{(s)})^{t+1}=\sum\limits_{s\in I_m}\sum\limits_{i\in I_n}(b_{i,n-1-i}^{(s)})^{t+1}=mS_{t+1}(n)$.

  \vskip 10pt
  A recursive construction of $t$-multimagic squares were provided in \cite{Zhangcms} by using CMS.
\begin{con} \emph{\cite{Zhangcms}} \label{conmscms}
  If there is an  MS$(m,t)$ and there is an $m$-CMS$(n,t-1)$,  then there is an MS$(mn,t)$.
\end{con}

\section{Complementary $t$-multimagic squares based on strong double large sets of orthogonal arrays}

In this section, we   give a family of CMS with prime power order   by making use   of SDLOA.

 Let $F_q$ be a Galois field with $q$ elements.
  Let $M_{k\times s}^{(t)}(F_q)$ be the set of $k\times s$ matrices over  $F_q$ with the property that any $t$ rows are linearly independent.
The following results are useful.

\begin{lem}\emph{\label{LOA1}\cite{Zhang}}
  If there exists a  nonsingular  matrix $E=(e_{i,j})_{k\times k}$ over $F_q$ with a submatrix $E_1\in M_{k\times s}^{(t)}(F_q)$,  then there exists an LOA$(q^s; t, k, q)$.
 \end{lem}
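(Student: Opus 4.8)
The plan is to build the $q^{k-s}$ constituent arrays of the large set as the affine images of $F_q^s$ under the two column blocks of $E$, and then to read off simplicity and strength $t$ directly from the two hypotheses. Since $E_1$ is a $k\times s$ submatrix of the $k\times k$ matrix $E$, it is obtained by selecting $s$ of the columns of $E$; write $E=[E_1\mid E_2]$ (after permuting columns if necessary), where $E_2$ gathers the remaining $k-s$ columns. Regarding each column of an OA as a vector in $F_q^k$, I would define, for every $y\in F_q^{k-s}$, the $k\times q^s$ array $A_y$ whose columns are $E_1x+E_2y$ as $x$ ranges over $F_q^s$. There are $q^{k-s}=q^k/q^s=v^k/N$ such arrays, exactly the number of OAs demanded by an LOA$(q^s;t,k,q)$.

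I would then check the three requirements in turn. For the large-set (partition) property, note that $(x,y)\mapsto E_1x+E_2y=E\binom{x}{y}$ is a bijection of $F_q^k$ because $E$ is nonsingular; hence every $k$-tuple occurs as a column in exactly one $A_y$, so the union of the $A_y$ is the trivial OA$(q^k;t,k,q)$. For simplicity of a single $A_y$, observe that the $s$ columns constituting $E_1$ are columns of the nonsingular matrix $E$ and are therefore linearly independent, so $x\mapsto E_1x$ is injective and the $q^s$ columns of $A_y$ are pairwise distinct.

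The heart of the argument is the strength-$t$ condition, and it is precisely here that the hypothesis $E_1\in M_{k\times s}^{(t)}(F_q)$ is used. Fix any $t$ rows $i_1,\dots,i_t$; the corresponding rows of $E_1$ form a $t\times s$ matrix which, by hypothesis, has linearly independent rows and hence rank $t$. Thus the linear map $x\mapsto\bigl((E_1x)_{i_1},\dots,(E_1x)_{i_t}\bigr)$ from $F_q^s$ onto $F_q^t$ is surjective with every fibre of size $q^{s-t}$, and adding the fixed vector $(E_2y)_{i_1,\dots,i_t}$ only translates the image, preserving this uniform fibre structure. Consequently, in the selected $t\times q^s$ subarray of $A_y$ every $t$-tuple appears exactly $q^{s-t}=N/v^t$ times, so each $A_y$ is an OA$(q^s;t,k,q)$ of index $\lambda=q^{s-t}$. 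Combining the three facts shows that $\{A_y:y\in F_q^{k-s}\}$ is an LOA$(q^s;t,k,q)$. I expect the only delicate point to be the bookkeeping that turns the rank-$t$ property of the chosen rows into equal fibre sizes, together with confirming that the additive shift by $E_2y$ leaves the $t$-tuple counts unchanged; the linear-independence hypothesis makes each such verification routine rather than substantial.
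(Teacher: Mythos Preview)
Your argument is correct and is precisely the standard construction: partition $F_q^k$ into the cosets $E_1F_q^s+E_2y$, use nonsingularity of $E$ for the large-set property, rank of $E_1$ for simplicity, and the ``any $t$ rows independent'' hypothesis for strength $t$. The present paper does not supply its own proof of this lemma but quotes it from \cite{Zhang}; however, the way the paper invokes Lemma~\ref{LOA1} in the proof of Lemma~\ref{OA2} (e.g., concluding that $R_X$, $T_Y$, $U$, $U'$ are LOAs from the pair of hypotheses ``first block lies in $M_{2t\times t}^{(t)}(F_q)$'' and ``full matrix is nonsingular'') makes clear that exactly your construction is intended, so your proof matches the cited one in all essentials.
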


\begin{lem} \label{OA1}\emph{\cite{Zhang}}
 Let   $k=2s$, $2\leq t\leq s$. If  there exists a   nonsingular
 matrix $E=(e_{i,j})_{k\times k}=(E_1, E_2)$ over $F_q$ such that  $E_1$, $E_2$, $E_1+E_2$, $E_1-E_2$   are  all in  $M_{k\times s}^{(t)}(F_q)$,  then  there exists an  SDLOA $(q^s; t, k, q)$.
 \end{lem}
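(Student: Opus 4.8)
The plan is to realise the whole SDLOA from the single nonsingular matrix $E=(E_1,E_2)$ by one linear-algebraic construction, arranged so that each of the four required array families is governed by exactly one of the four hypotheses $E_1,E_2,E_1+E_2,E_1-E_2\in M_{k\times s}^{(t)}(F_q)$. The workhorse is an elementary fact I would state and prove once, then invoke four times: for a $k\times s$ matrix $M$ over $F_q$ and a fixed $\mathbf b\in F_q^k$, the $k\times q^s$ array whose columns are $M\mathbf u+\mathbf b$ as $\mathbf u$ ranges over $F_q^s$ is an OA$(q^s;t,k,q)$ if and only if $M\in M_{k\times s}^{(t)}(F_q)$. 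Indeed, restricting to any $t$ rows yields an affine map $F_q^s\to F_q^t$ whose $t\times s$ coefficient block has rank $t$ precisely when those rows are linearly independent; in that case the map is surjective with every fibre a coset of size $q^{s-t}=N/v^t$, which is exactly the OA property. Since this property is invariant under the translation $\mathbf b$, the constant shift never interferes.

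First I would fix the explicit LOA underlying Lemma \ref{LOA1}. Identify $I_{q^s}$ with $F_q^s$, label the OAs by $\mathbf w\in F_q^s$ and their columns by $\mathbf u\in F_q^s$, and take the $\mathbf u$-th column of $A_{\mathbf w}$ to be $E_1\mathbf u+E_2\mathbf w$. By the workhorse fact with $M=E_1$, the hypothesis $E_1\in M_{k\times s}^{(t)}(F_q)$ makes each $A_{\mathbf w}$ an OA$(q^s;t,k,q)$; each is simple because $E_1$ has full column rank $s$ (its columns are among those of the nonsingular $E$); and since $E$ is nonsingular the map $(\mathbf u,\mathbf w)\mapsto E_1\mathbf u+E_2\mathbf w$ is a bijection of $F_q^s\times F_q^s$ onto $F_q^k$, so every $k$-tuple occurs in exactly one $A_{\mathbf w}$. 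This is the LOA$(q^s;t,k,q)$.

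Next I would verify the double and the first strong condition, one per remaining hypothesis. For the DLOA condition the array $B_{\mathbf u}$ has $\mathbf w$-th column equal to the $\mathbf u$-th column of $A_{\mathbf w}$, namely $E_2\mathbf w+E_1\mathbf u$; by the workhorse fact with $M=E_2$ and shift $E_1\mathbf u$, the hypothesis $E_2\in M_{k\times s}^{(t)}(F_q)$ makes each $B_{\mathbf u}$ an OA, and the same bijection shows $\{B_{\mathbf u}\}$ is again an LOA, so the family is a DLOA$(q^s;t,k,q)$. For strongness, the $\mathbf u$-th column of $D$ is $a^{(\mathbf u)}_{\mathbf u}=(E_1+E_2)\mathbf u$, so $D$ is an OA precisely because $E_1+E_2\in M_{k\times s}^{(t)}(F_q)$.

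The one genuinely delicate point, and the main obstacle, is the reversal index $N-1-j$ defining $D'$, which must be reconciled with the additive structure of $F_q^s$. I would resolve it by choosing the identification $I_{q^s}\cong F_q^s$ so that the integer involution $j\mapsto q^s-1-j$ becomes an $F_q$-affine involution. Concretely, order $F_q=\{\gamma_0,\dots,\gamma_{q-1}\}$ so that $\gamma_{q-1-r}=c-\gamma_r$ for a fixed $c\in F_q$ and all $r$; this is possible in every characteristic by pairing $x$ with $c-x$ (when $q$ is odd, place the unique fixed point $c/2$ at the central index and arrange the remaining pairs symmetrically; when $q$ is even, choose $c\neq 0$, for which the involution is fixed-point-free), and then index $F_q^s$ by mixed radix base $q$. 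Under this labelling the reversal sends $\mathbf u$ to $\overline{\mathbf u}=c\mathbf 1-\mathbf u$, so the $\mathbf u$-th column of $D'$ is $a^{(\mathbf u)}_{\overline{\mathbf u}}=E_1\overline{\mathbf u}+E_2\mathbf u=(E_2-E_1)\mathbf u+cE_1\mathbf 1$. By the workhorse fact with $M=E_2-E_1$ and a constant shift, $D'$ is an OA exactly because $E_1-E_2\in M_{k\times s}^{(t)}(F_q)$. Assembling the four parts yields the SDLOA$(q^s;t,k,q)$.
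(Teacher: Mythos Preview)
Your proof is correct and follows essentially the same construction as the one the paper attributes to \cite{Zhang}: the paper does not reprove this lemma but later invokes ``the proof of Lemma \ref{OA1}'' with exactly the array $C_{X,Y}=E_1X+E_2Y$ and the indexing convention $\xi_j+\xi_{q-1-j}=\xi_{q-1}$, which is your identification with $c=\xi_{q-1}$ and $\overline{\mathbf u}=\widetilde X-\mathbf u$. Your careful justification that the integer reversal $j\mapsto N-1-j$ becomes the $F_q$-affine map $\mathbf u\mapsto c\mathbf 1-\mathbf u$ under a suitable base-$q$ labeling is exactly the point the paper encodes in its choice of the $\xi_j$ ordering, so the approaches coincide.
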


Let $F_q^k$ be the set of all $k$-dimensional column vectors over $F_q$.
Denote the set of a complete set of congruence classes modulo $n$ by $\mathbb{Z}_n$, i.e.,  $\mathbb{Z}_n=\{\overline{0},\overline{1},\cdots,\overline{n-1}\}$.   It is known that $\mathbb{Z}_n$ is a field whenever $n$ is prime.

 Let $q=p^m$, $p$ is prime, $ n\geq1$. A Galois field $F_q$ can be written as a vector space over $\mathbb{Z}_p$, i.e.,
$F_q=\{\xi_j|\xi_j=(\overline w_{0},\overline w_{1},\cdots,\overline w_{n-1})^T\in \mathbb{Z}_p^m, j=\sum\limits_{i=0}^{m-1}w_ip^i\}.$
We have $\xi_j+\xi_{q-1-j}=\xi_{q-1}$, $j\in I_q$.
Denote $(\xi_{q-1},\xi_{q-1},\cdots,\xi_{q-1})^T\in F_q^t$ by $\widetilde{X}$.

\vskip 5pt
We shall use SDLOA$(q^t; t, 2t, q)$ to construct  $q^t$-CMS$(q^t,t)$.
For convenience,  an  SDLOA $(q^t; t, 2t, q)$ will be written  as a $2t\times q^t \times q^t$ array. Further, a $2t\times q^t \times q^t$ array can be viewed as  a $q^t \times q^t$ matrix $C$, where each element of $C$ is a $2t$-dimensional  vector.
Clearly, such a $q^t \times q^t$ matrix  $C$ is an SDLOA$(q^t; t, 2t, q)$ if it has the property that each row, each column, main diagonal and back diagonal of $C$ forms an OA$(q^t; t, 2t, q)$,
and all the rows of $C$ form an LOA$(q^t; t, 2t, q)$.
We shall index the rows and the columns of $C$ by the vectors $(x,y)^T\in F_q^t$ or just by $x+qy$.
We now give a $9$-CMS$(9,2)$ by using SDLOA.
    \begin{lem}\label{cms9}
There exists a  $9$-CMS$(9,2)$.
    \end{lem}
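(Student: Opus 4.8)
The plan is to realize the nine squares as translates of one base square coming from an $\mathrm{SDLOA}(9;2,4,3)$; the point is that summing over all nine translates supplies exactly the one extra ``level'' demanded by the conditions $(R1)$--$(R3)$. Fix a nonsingular $4\times4$ matrix $E=(E_1\mid E_2)$ over $\mathbb{Z}_3$ with $E_1,E_2,E_1+E_2,E_1-E_2$ all in $M_{4\times2}^{(2)}(\mathbb{Z}_3)$ (such an $E$ exists --- it is precisely the datum realizing an $\mathrm{SDLOA}(9;2,4,3)$ in Lemma~\ref{OA1}, available since $q=3\ge 2t-1$; cf.\ Lemma~\ref{zhang}). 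Let $\phi\colon\mathbb{Z}_3^2\to I_9$ be the base-$3$ bijection and $\hat\imath:=\phi^{-1}(i)$, view the $\mathrm{SDLOA}$ as the $9\times9$ array $C=(c_{i,j})$ with $c_{i,j}=E_1\hat\imath+E_2\hat\jmath\in\mathbb{Z}_3^4$, split $c_{i,j}=(x_{i,j},y_{i,j})$ into its first two and last two coordinates, and set $M_{i,j}=\phi(x_{i,j})+9\,\phi(y_{i,j})$; this $M$ is an $\mathrm{MS}(9,2)$ by Construction~\ref{main2}. The structural fact I will use is that each row, each column, the main diagonal and the back diagonal of $C$ is a coset of $\mathrm{col}(E_2)$, $\mathrm{col}(E_1)$, $\mathrm{col}(E_1+E_2)$ and $\mathrm{col}(E_1-E_2)$ respectively, and on any coset of one of these subspaces every pair of the four coordinate functions is jointly uniform (this is exactly what membership in $M_{4\times2}^{(2)}(\mathbb{Z}_3)$ says); since $M_{i,j}$ is a linear form in the four digits of $c_{i,j}$, it follows that $\sum M_{i,j}$ and $\sum M_{i,j}^2$ taken over any one of these lines equal the same values as over any other.

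Next I would pick $2\times2$ matrices $F,G$ over $\mathbb{Z}_3$ with $F$, $G$, $F-G$ and $F+G$ all nonsingular --- for instance $F=I_2$ and $G=\left(\begin{smallmatrix}0&1\\2&0\end{smallmatrix}\right)$ --- and put, for $s\in I_9$,
\[
b_{i,j}^{(s)}=M_{\phi(F\hat s+\hat\imath),\,\phi(G\hat s+\hat\jmath)},\qquad i,j\in I_9,
\]
so that $B_s=(b_{i,j}^{(s)})$ is $M$ with its rows translated by $F\hat s$ and its columns by $G\hat s$ in the $\mathbb{Z}_3^2$-structure on $I_9$. That each $B_s$ is an $\mathrm{MS}(9,2)$ is then immediate: $B_s$ has the same entries as $M$, and its rows (columns) are a permutation of the rows (columns) of $M$, so it is magic and bimagic along rows and columns; moreover its $(i,i)$ cell carries the vector $(E_1+E_2)\hat\imath+(E_1F+E_2G)\hat s$ and its $(i,8-i)$ cell carries $(E_1-E_2)\hat\imath+E_2\widetilde X+(E_1F+E_2G)\hat s$ (using $\widehat{8-i}=\widetilde X-\hat\imath$), so the two diagonals of $B_s$ are again cosets of $\mathrm{col}(E_1+E_2)$ and $\mathrm{col}(E_1-E_2)$, hence magic and bimagic.

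For $(R1)$--$(R3)$ at exponent $t+1=3$: fix a row $i$. As $(s,j)$ runs over $I_9\times I_9$, the pair $\bigl(\phi(F\hat s+\hat\imath),\phi(G\hat s+\hat\jmath)\bigr)$ runs bijectively over all index pairs of $M$, because the assignment $(\hat s,\hat\jmath)\mapsto(F\hat s+\hat\imath,\,G\hat s+\hat\jmath)$ is affine with linear part the block matrix $\left(\begin{smallmatrix}F&0\\G&I\end{smallmatrix}\right)$, which is invertible exactly when $F$ is. Hence $\sum_{s\in I_9}\sum_{j\in I_9}(b_{i,j}^{(s)})^{3}=\sum_{k\in I_{81}}k^{3}=9\,S_3(9)$, independent of $i$: this is $(R1)$. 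Exchanging the roles of $i$ and $j$ gives $(R2)$ from the invertibility of $G$, and restricting to $j=i$ (respectively $j=8-i$) gives the two halves of $(R3)$ from the invertibility of $F-G$ (respectively $F+G$), via the block matrices $\left(\begin{smallmatrix}F&I\\G&I\end{smallmatrix}\right)$ and $\left(\begin{smallmatrix}F&I\\G&-I\end{smallmatrix}\right)$.

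The main obstacle I anticipate is the interplay of these two steps: one has to check that translating $M$ does not destroy the diagonal magic/bimagic property --- this is where the $\mathrm{SDLOA}$ hypotheses $E_1\pm E_2\in M_{4\times2}^{(2)}(\mathbb{Z}_3)$ are consumed, and it explains why merely translating rows (the choice $G=0$, which is in any case singular) would break $(R2)$ --- and one has to produce a pair $F,G$ with $F,G,F\pm G$ all nonsingular over $\mathbb{Z}_3$. No pair of \emph{scalars} works here, since $f+g=0$ as soon as $f\neq g$ in $\mathbb{Z}_3\setminus\{0\}$, so a genuinely non-scalar $G$ is forced; this is the concrete reason why the case $q=3$ is isolated in a lemma of its own. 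With an explicit $E$ and such $F,G$ fixed, nothing remains but the routine verification of the displayed identities, which is the linear algebra indicated above.
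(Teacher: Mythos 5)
Your construction is at bottom the same as the paper's: both realize the nine squares as coordinate translates $E_1(X+H)+E_2(Y+H^*)$ of a single square built from one SDLOA$(9;2,4,3)$, the only freedom being how the column shift $H^*$ depends on the row shift $H$. The difference is in how the conditions $(R1)$--$(R3)$ are verified. The paper fixes an explicit $E$ and explicit shifts (in effect the affine rule $H^*=\mathrm{diag}(2,1)H+(0,1)^T$), writes out all $81$ arrays $A_{ik}$, and checks the three LOA coverage properties by inspection; you instead take linear shifts $(F\hat s,\,G\hat s)$ and reduce $(R1)$--$(R3)$ to the invertibility of the block matrices $\left(\begin{smallmatrix}F&0\\G&I\end{smallmatrix}\right)$, $\left(\begin{smallmatrix}F&I\\G&0\end{smallmatrix}\right)$, $\left(\begin{smallmatrix}F&I\\G&\pm I\end{smallmatrix}\right)$, i.e.\ to $F$, $G$, $F+G$, $F-G$ all nonsingular, which your choice $F=I_2$, $G=\left(\begin{smallmatrix}0&1\\2&0\end{smallmatrix}\right)$ satisfies over $\mathbb{Z}_3$; your coset argument for the diagonals of each translate (consuming $E_1\pm E_2\in M_{4\times2}^{(2)}(\mathbb{Z}_3)$) and the block-determinant computations are correct. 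This amounts to a matrix analogue of Lemma \ref{OA2} with the scalar $d$ replaced by the pair $(F,G)$, and your observation that no scalar pair works over $\mathbb{Z}_3$ correctly explains why this case needs separate treatment; the gain over the paper's proof is that your criterion involves only the shift matrices (given any admissible $E$) and replaces the paper's brute-force listing by a short linear-algebra argument. One small repair is needed: the existence of a suitable $E$ is not actually supplied by Lemma \ref{OA1} (which is conditional) nor by Lemma \ref{zhang} (which only asserts the magic square), so you should display one explicitly --- for instance the paper's $E_1=\left(\begin{smallmatrix}1&0\\0&1\\1&1\\2&1\end{smallmatrix}\right)$, $E_2=\left(\begin{smallmatrix}0&1\\2&0\\1&2\\1&1\end{smallmatrix}\right)$, for which nonsingularity of $E$ and membership of $E_1,E_2,E_1\pm E_2$ in $M_{4\times2}^{(2)}(\mathbb{Z}_3)$ is a finite check; with that inserted, your proof is complete.
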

   \begin{proof}
Let  {$E_1=\left( \begin{smallmatrix}
1&0\\
0&1\\
1&1\\
2&1\\
 \end{smallmatrix}\right)$,
 $E_2=\left( \begin{smallmatrix}
0&1\\
2&0\\
1&2\\
1&1\\
 \end{smallmatrix}\right)$ and $E=(E_1, E_2)$. We can verify that $E$ is nonsingular and $E_1, E_2, E_1+E_2, E_1-E_2\in M_{4\times 2}^{(2)}(\mathbb{Z}_3)$.}
We take
  {\renewcommand\arraystretch{0.8}
\setlength{\arraycolsep}{1.5pt}
 \small
\begin{center}
$H_{0}=\binom{0}{0},H_{1}=\binom{0}{1},H_{2}=\binom{0}{2},
 H_{3}=\binom{1}{0},H_{4}=\binom{1}{1},H_{5}=\binom{1}{2},
 H_{6}=\binom{2}{0},H_{7}=\binom{2}{1},H_{8}=\binom{2}{2};$
\end{center}}
  {\renewcommand\arraystretch{0.8}
\setlength{\arraycolsep}{1.5pt}
 \small
\begin{center}
$H^*_{0}=\binom{0}{1},H^*_{1}=\binom{0}{2},H^*_{2}=\binom{0}{0},
 H^*_{3}=\binom{2}{1},H^*_{4}=\binom{2}{2},H^*_{5}=\binom{2}{0},
 H^*_{6}=\binom{1}{1},H^*_{7}=\binom{1}{2},H^*_{8}=\binom{1}{0}.$
\end{center}}
 \noindent Let
 \begin{center}
$C_{i}=(C^{(i)}_{X,Y})$,
$C^{(i)}_{X,Y}=(E_1, E_2)
\left(
\begin{array}{l}
X+H_i\\
Y+H_{i}^{*}
\end{array}
\right)
=E_1(X+H_{i})+E_2(Y+H_{i}^{*}), X,Y\in \mathbb{Z}_3^2.$
\end{center}
We list the $C_0,C_1,\cdots,C_8$ in the following.  For convenience, each entry of $C_i$ is represented as a row vector.
 {{\renewcommand\arraystretch{0.8}
\setlength{\arraycolsep}{0.7pt}
\footnotesize
\begin{center}
$C_0=\left(
        \begin{array}{c|c|c|c|c|c|c|c|c }
1021&2012&0000&1202&2220&0211&1110&2101&0122\\
1102&2120&0111&1010&2001&0022&1221&2212&0200\\
1210&2201&0222&1121&2112&0100&1002&2020&0011\\
2000&0021&1012&2211&0202&1220&2122&0110&1101\\
2111&0102&1120&2022&0010&1001&2200&0221&1212\\
2222&0210&1201&2100&0121&1112&2011&0002&1020\\
0012&1000&2021&0220&1211&2202&0101&1122&2110\\
0120&1111&2102&0001&1022&2010&0212&1200&2221\\
0201&1222&2210&0112&1100&2121&0020&1011&2002
        \end{array}
        \right)$,
        $C_1=\left(
        \begin{array}{c|c|c|c|c|c|c|c|c c}
2120&0111&1102&2001&0022&1010&2212&0200&1221\\
2201&0222&1210&2112&0100&1121&2020&0011&1002\\
2012&0000&1021&2220&0211&1202&2101&0122&1110&\\
0102&1120&2111&0010&1001&2022&0221&1212&2200&\\
0210&1201&2222&0121&1112&2100&0002&1020&2011&\\
0021&1012&2000&0202&1220&2211&0110&1101&2122&\\
1111&2102&0120&1022&2010&0001&1200&2221&0212&\\
1222&2210&0201&1100&2121&0112&1011&2002&0020&\\
1000&2021&0012&1211&2202&0220&1122&2110&0101&
        \end{array}
        \right)$,
                $C_2=\left(
        \begin{array}{c|c|c|c|c|c|c|c|c c}
0222&1210&2201&0100&1121&2112&0011&1002&2020&\\
0000&1021&2012&0211&1202&2220&0122&1110&2101&\\
0111&1102&2120&0022&1010&2001&0200&1221&2212&\\
1201&2222&0210&1112&2100&0121&1020&2011&0002&\\
1012&2000&0021&1220&2211&0202&1101&2122&0110&\\
1120&2111&0102&1001&2022&0010&1212&2200&0221&\\
2210&0201&1222&2121&0112&1100&2002&0020&1011&\\
2021&0012&1000&2202&0220&1211&2110&0101&1122&\\
2102&0120&1111&2010&0001&1022&2221&0212&1200&
        \end{array}
        \right)$,
                $C_3=\left(
        \begin{array}{c|c|c|c|c|c|c|c|cc }
2122&0110&1101&2000&0021&1012&2211&0202&1220&\\
2200&0221&1212&2111&0102&1120&2022&0010&1001&\\
2011&0002&1020&2222&0210&1201&2100&0121&1112&\\
0101&1122&2110&0012&1000&2021&0220&1211&2202&\\
0212&1200&2221&0120&1111&2102&0001&1022&2010&\\
0020&1011&2002&0201&1222&2210&0112&1100&2121&\\
1110&2101&0122&1021&2012&0000&1202&2220&0211&\\
1221&2212&0200&1102&2120&0111&1010&2001&0022&\\
1002&2020&0011&1210&2201&0222&1121&2112&0100&
        \end{array}
        \right)$,
                $C_4=\left(
        \begin{array}{c|c|c|c|c|c|c|c|cc }
0221&1212&2200&0102&1120&2111&0010&1001&2022&\\
0002&1020&2011&0210&1201&2222&0121&1112&2100&\\
0110&1101&2122&0021&1012&2000&0202&1220&2211&\\
1200&2221&0212&1111&2102&0120&1022&2010&0001&\\
1011&2002&0020&1222&2210&0201&1100&2121&0112&\\
1122&2110&0101&1000&2021&0012&1211&2202&0220&\\
2212&0200&1221&2120&0111&1102&2001&0022&1010&\\
2020&0011&1002&2201&0222&1210&2112&0100&1121&\\
2101&0122&1110&2012&0000&1021&2220&0211&1202&
        \end{array}
        \right)$,
                $C_5=\left(
        \begin{array}{c|c|c|c|c|c|c|c|cc }
1020&2011&0002&1201&2222&0210&1112&2100&0121&\\
1101&2122&0110&1012&2000&0021&1220&2211&0202&\\
1212&2200&0221&1120&2111&0102&1001&2022&0010&\\
2002&0020&1011&2210&0201&1222&2121&0112&1100&\\
2110&0101&1122&2021&0012&1000&2202&0220&1211&\\
2221&0212&1200&2102&0120&1111&2010&0001&1022&\\
0011&1002&2020&0222&1210&2201&0100&1121&2112&\\
0122&1110&2101&0000&1021&2012&0211&1202&2220&\\
0200&1221&2212&0111&1102&2120&0022&1010&2001&
        \end{array}
        \right)$,
                $C_6=\left(
        \begin{array}{c|c|c|c|c|c|c|c|c c}
0220&1211&2202&0101&1122&2110&0012&1000&2021&\\
0001&1022&2010&0212&1200&2221&0120&1111&2102&\\
0112&1100&2121&0020&1011&2002&0201&1222&2210&\\
1202&2220&0211&1110&2101&0122&1021&2012&0000&\\
1010&2001&0022&1221&2212&0200&1102&2120&0111&\\
1121&2112&0100&1002&2020&0011&1210&2201&0222&\\
2211&0202&1220&2122&0110&1101&2000&0021&1012&\\
2022&0010&1001&2200&0221&1212&2111&0102&1120&\\
2100&0121&1112&2011&0002&1020&2222&0210&1201&
        \end{array}
        \right)$,
                        $C_7=\left(
        \begin{array}{c|c|c|c|c|c|c|c|cc }
1022&2010&0001&1200&2221&0212&1111&2102&0120&\\
1100&2121&0112&1011&2002&0020&1222&2210&0201&\\
1211&2202&0220&1122&2110&0101&1000&2021&0012&\\
2001&0022&1010&2212&0200&1221&2120&0111&1102&\\
2112&0100&1121&2020&0011&1002&2201&0222&1210&\\
2220&0211&1202&2101&0122&1110&2012&0000&1021&\\
0010&1001&2022&0221&1212&2200&0102&1120&2111&\\
0121&1112&2100&0002&1020&2011&0210&1201&2222&\\
0202&1220&2211&0110&1101&2122&0021&1012&2000&
        \end{array}
        \right)$,
                        $C_8=\left(
        \begin{array}{c|c|c|c|c|c|c|c|c c}
2121&0112&1100&2002&0020&1011&2210&0201&1222&\\
2202&0220&1211&2110&0101&1122&2021&0012&1000&\\
2010&0001&1022&2221&0212&1200&2102&0120&1111&\\
0100&1121&2112&0011&1002&2020&0222&1210&2201&\\
0211&1202&2220&0122&1110&2101&0000&1021&2012&\\
0022&1010&2001&0200&1221&2212&0111&1102&2120&\\
1112&2100&0121&1020&2011&0002&1201&2222&0210&\\
1220&2211&0202&1101&2122&0110&1012&2000&0021&\\
1001&2022&0010&1212&2200&0221&1120&2111&0102&
        \end{array}
        \right)\ \ \ \ \ \ \ \  \ \ \  \ \ \ \ \ \ \ \ \ \ \  \ \ \  \ \ \ \ \ \ \ \ \ \ \  \ \ \  \ \ \ \ \ \ \ \ \ \ \  \ \ \  \ \ \ \ \ \ \ \ \ \ \ \ \ \  \ \ \  \ \ \  $.
\end{center}}}
\noindent For any $i,k,l\in I_9$,  if we write the $(k,l)$-entry of $C_i$  in a column vector form, then each row of $C_i$ forms  a $4\times 9$ matrix. Thus we get   81  $4\times 9$ matrix $A_{ik}$ with the property that the $l$-th column of $A_{ik}$ is the transpose of the vector of the $(k,l)$-entry of $C_i$.
 We list all the $4\times 9$ matrices as follows.
{{\renewcommand\arraystretch{0.6}
\setlength{\arraycolsep}{1.0pt}
\footnotesize
\begin{center}
$A_{00}=\left(
        \begin{array}{ccccccccc }
1&2&0&1&2&0&1&2&0\\0&0&0&2&2&2&1&1&1\\2&1&0&0&2&1&1&0&2\\1&2&0&2&0&1&0&1&2
        \end{array}
        \right)$,
        $A_{01}=\left(
        \begin{array}{ccccccccc }
1&2&0&1&2&0&1&2&0\\1&1&1&0&0&0&2&2&2\\0&2&1&1&0&2&2&1&0\\2&0&1&0&1&2&1&2&0
        \end{array}
        \right)$,
                $A_{02}=\left(
        \begin{array}{ccccccccc }
1&2&0&1&2&0&1&2&0\\2&2&2&1&1&1&0&0&0\\1&0&2&2&1&0&0&2&1\\0&1&2&1&2&0&2&0&1
        \end{array}
        \right)$,\\
                $A_{03}=\left(
        \begin{array}{ccccccccc }
2&0&1&2&0&1&2&0&1\\0&0&0&2&2&2&1&1&1\\0&2&1&1&0&2&2&1&0\\0&1&2&1&2&0&2&0&1
        \end{array}
        \right)$,
                $A_{04}=\left(
        \begin{array}{ccccccccc }
2&0&1&2&0&1&2&0&1\\1&1&1&0&0&0&2&2&2\\1&0&2&2&1&0&0&2&1\\1&2&0&2&0&1&0&1&2
        \end{array}
        \right)$,
                $A_{05}=\left(
        \begin{array}{ccccccccc }
2&0&1&2&0&1&2&0&1\\2&2&2&1&1&1&0&0&0\\2&1&0&0&2&1&1&0&2\\2&0&1&0&1&2&1&2&0
        \end{array}
        \right)$,\\
                $A_{06}=\left(
        \begin{array}{ccccccccc }
0&1&2&0&1&2&0&1&2\\0&0&0&2&2&2&1&1&1\\1&0&2&2&1&0&0&2&1\\2&0&1&0&1&2&1&2&0
        \end{array}
        \right)$,
                $A_{07}=\left(
        \begin{array}{ccccccccc }
0&1&2&0&1&2&0&1&2\\1&1&1&0&0&0&2&2&2\\2&1&0&0&2&1&1&0&2\\0&1&2&1&2&0&2&0&1
        \end{array}
        \right)$,
                $A_{08}=\left(
        \begin{array}{ccccccccc }
0&1&2&0&1&2&0&1&2\\2&2&2&1&1&1&0&0&0\\0&2&1&1&0&2&2&1&0\\1&2&0&2&0&1&0&1&2
        \end{array}
        \right).$\\
        $A_{10}=\left(
        \begin{array}{ccccccccc }
2&0&1&2&0&1&2&0&1\\1&1&1&0&0&0&2&2&2\\2&1&0&0&2&1&1&0&2\\0&1&2&1&2&0&2&0&1\\
        \end{array}
        \right)$,
        $A_{11}=\left(
        \begin{array}{ccccccccc }
2&0&1&2&0&1&2&0&1\\2&2&2&1&1&1&0&0&0\\0&2&1&1&0&2&2&1&0\\1&2&0&2&0&1&0&1&2\\
        \end{array}
        \right)$,
                $A_{12}=\left(
        \begin{array}{ccccccccc }
2&0&1&2&0&1&2&0&1\\0&0&0&2&2&2&1&1&1\\1&0&2&2&1&0&0&2&1\\2&0&1&0&1&2&1&2&0\\
        \end{array}
        \right)$,\\
                $A_{13}=\left(
        \begin{array}{ccccccccc }
0&1&2&0&1&2&0&1&2\\1&1&1&0&0&0&2&2&2\\0&2&1&1&0&2&2&1&0\\2&0&1&0&1&2&1&2&0\\
        \end{array}
        \right)$,
                $A_{14}=\left(
        \begin{array}{ccccccccc }
0&1&2&0&1&2&0&1&2\\2&2&2&1&1&1&0&0&0\\1&0&2&2&1&0&0&2&1\\0&1&2&1&2&0&2&0&1\\
        \end{array}
        \right)$,
                $A_{15}=\left(
        \begin{array}{ccccccccc }
0&1&2&0&1&2&0&1&2\\0&0&0&2&2&2&1&1&1\\2&1&0&0&2&1&1&0&2\\1&2&0&2&0&1&0&1&2\\
        \end{array}
        \right)$,\\
                $A_{16}=\left(
        \begin{array}{ccccccccc }
1&2&0&1&2&0&1&2&0\\1&1&1&0&0&0&2&2&2\\1&0&2&2&1&0&0&2&1\\1&2&0&2&0&1&0&1&2\\
        \end{array}
        \right)$,
                $A_{17}=\left(
        \begin{array}{ccccccccc }
1&2&0&1&2&0&1&2&0\\2&2&2&1&1&1&0&0&0\\2&1&0&0&2&1&1&0&2\\2&0&1&0&1&2&1&2&0\\
        \end{array}
        \right)$,
                $A_{18}=\left(
        \begin{array}{ccccccccc }
1&2&0&1&2&0&1&2&0\\0&0&0&2&2&2&1&1&1\\0&2&1&1&0&2&2&1&0\\0&1&2&1&2&0&2&0&1\\
        \end{array}
        \right). $\\
                $A_{20}=\left(
        \begin{array}{ccccccccc }
0&1&2&0&1&2&0&1&2\\2&2&2&1&1&1&0&0&0\\2&1&0&0&2&1&1&0&2\\2&0&1&0&1&2&1&2&0\\
        \end{array}
        \right)$,
        $A_{21}=\left(
        \begin{array}{ccccccccc }
0&1&2&0&1&2&0&1&2\\0&0&0&2&2&2&1&1&1\\0&2&1&1&0&2&2&1&0\\0&1&2&1&2&0&2&0&1\\
        \end{array}
        \right)$,
                $A_{22}=\left(
        \begin{array}{ccccccccc }
0&1&2&0&1&2&0&1&2\\1&1&1&0&0&0&2&2&2\\1&0&2&2&1&0&0&2&1\\1&2&0&2&0&1&0&1&2\\
        \end{array}
        \right)$,\\
                $A_{23}=\left(
        \begin{array}{ccccccccc }
1&2&0&1&2&0&1&2&0\\2&2&2&1&1&1&0&0&0\\0&2&1&1&0&2&2&1&0\\1&2&0&2&0&1&0&1&2\\
        \end{array}
        \right)$,
                $A_{24}=\left(
        \begin{array}{ccccccccc }
1&2&0&1&2&0&1&2&0\\0&0&0&2&2&2&1&1&1\\1&0&2&2&1&0&0&2&1\\2&0&1&0&1&2&1&2&0\\
        \end{array}
        \right)$,
                $A_{25}=\left(
        \begin{array}{ccccccccc }
1&2&0&1&2&0&1&2&0\\1&1&1&0&0&0&2&2&2\\2&1&0&0&2&1&1&0&2\\0&1&2&1&2&0&2&0&1\\
        \end{array}
        \right)$,\\
                $A_{26}=\left(
        \begin{array}{ccccccccc }
2&0&1&2&0&1&2&0&1\\2&2&2&1&1&1&0&0&0\\1&0&2&2&1&0&0&2&1\\0&1&2&1&2&0&2&0&1\\
        \end{array}
        \right)$,
                $A_{27}=\left(
        \begin{array}{ccccccccc }
2&0&1&2&0&1&2&0&1\\0&0&0&2&2&2&1&1&1\\2&1&0&0&2&1&1&0&2\\1&2&0&2&0&1&0&1&2\\
        \end{array}
        \right)$,
                $A_{28}=\left(
        \begin{array}{ccccccccc }
2&0&1&2&0&1&2&0&1\\1&1&1&0&0&0&2&2&2\\0&2&1&1&0&2&2&1&0\\2&0&1&0&1&2&1&2&0\\
        \end{array}
        \right).$\\
                $A_{30}=\left(
        \begin{array}{ccccccccc }
2&0&1&2&0&1&2&0&1\\1&1&1&0&0&0&2&2&2\\2&1&0&0&2&1&1&0&2\\2&0&1&0&1&2&1&2&0\\
        \end{array}
        \right)$,
        $A_{31}=\left(
        \begin{array}{ccccccccc }
2&0&1&2&0&1&2&0&1\\2&2&2&1&1&1&0&0&0\\0&2&1&1&0&2&2&1&0\\0&1&2&1&2&0&2&0&1\\
        \end{array}
        \right)$,
                $A_{32}=\left(
        \begin{array}{ccccccccc }
2&0&1&2&0&1&2&0&1\\0&0&0&2&2&2&1&1&1\\1&0&2&2&1&0&0&2&1\\1&2&0&2&0&1&0&1&2\\
        \end{array}
        \right)$,\\
                $A_{33}=\left(
        \begin{array}{ccccccccc }
0&1&2&0&1&2&0&1&2\\1&1&1&0&0&0&2&2&2\\0&2&1&1&0&2&2&1&0\\1&2&0&2&0&1&0&1&2\\
        \end{array}
        \right)$,
                $A_{34}=\left(
        \begin{array}{ccccccccc }
0&1&2&0&1&2&0&1&2\\2&2&2&1&1&1&0&0&0\\1&0&2&2&1&0&0&2&1\\2&0&1&0&1&2&1&2&0\\
        \end{array}
        \right)$,
                $A_{35}=\left(
        \begin{array}{ccccccccc }
0&1&2&0&1&2&0&1&2\\0&0&0&2&2&2&1&1&1\\2&1&0&0&2&1&1&0&2\\0&1&2&1&2&0&2&0&1\\
        \end{array}
        \right)$,\\
                $A_{36}=\left(
        \begin{array}{ccccccccc }
1&2&0&1&2&0&1&2&0\\1&1&1&0&0&0&2&2&2\\1&0&2&2&1&0&0&2&1\\0&1&2&1&2&0&2&0&1\\
        \end{array}
        \right)$,
                $A_{37}=\left(
        \begin{array}{ccccccccc }
1&2&0&1&2&0&1&2&0\\2&2&2&1&1&1&0&0&0\\2&1&0&0&2&1&1&0&2\\1&2&0&2&0&1&0&1&2\\
        \end{array}
        \right)$,
                $A_{38}=\left(
        \begin{array}{ccccccccc }
1&2&0&1&2&0&1&2&0\\0&0&0&2&2&2&1&1&1\\0&2&1&1&0&2&2&1&0\\2&0&1&0&1&2&1&2&0\\
        \end{array}
        \right). $\\
                $A_{40}=\left(
        \begin{array}{ccccccccc }
0&1&2&0&1&2&0&1&2\\2&2&2&1&1&1&0&0&0\\2&1&0&0&2&1&1&0&2\\1&2&0&2&0&1&0&1&2\\
        \end{array}
        \right)$,
        $A_{41}=\left(
        \begin{array}{ccccccccc }
0&1&2&0&1&2&0&1&2\\0&0&0&2&2&2&1&1&1\\0&2&1&1&0&2&2&1&0\\2&0&1&0&1&2&1&2&0\\
        \end{array}
        \right)$,
                $A_{42}=\left(
        \begin{array}{ccccccccc }
0&1&2&0&1&2&0&1&2\\1&1&1&0&0&0&2&2&2\\1&0&2&2&1&0&0&2&1\\0&1&2&1&2&0&2&0&1\\
        \end{array}
        \right)$,\\
                $A_{43}=\left(
        \begin{array}{ccccccccc }
1&2&0&1&2&0&1&2&0\\2&2&2&1&1&1&0&0&0\\0&2&1&1&0&2&2&1&0\\0&1&2&1&2&0&2&0&1\\
        \end{array}
        \right)$,
                $A_{44}=\left(
        \begin{array}{ccccccccc }
1&2&0&1&2&0&1&2&0\\0&0&0&2&2&2&1&1&1\\1&0&2&2&1&0&0&2&1\\1&2&0&2&0&1&0&1&2\\
        \end{array}
        \right)$,
                $A_{45}=\left(
        \begin{array}{ccccccccc }
1&2&0&1&2&0&1&2&0\\1&1&1&0&0&0&2&2&2\\2&1&0&0&2&1&1&0&2\\2&0&1&0&1&2&1&2&0\\
        \end{array}
        \right)$,\\
                $A_{46}=\left(
        \begin{array}{ccccccccc }
2&0&1&2&0&1&2&0&1\\2&2&2&1&1&1&0&0&0\\1&0&2&2&1&0&0&2&1\\2&0&1&0&1&2&1&2&0\\
        \end{array}
        \right)$,
                $A_{47}=\left(
        \begin{array}{ccccccccc }
2&0&1&2&0&1&2&0&1\\0&0&0&2&2&2&1&1&1\\2&1&0&0&2&1&1&0&2\\0&1&2&1&2&0&2&0&1\\
        \end{array}
        \right)$,
                $A_{48}=\left(
        \begin{array}{ccccccccc }
2&0&1&2&0&1&2&0&1\\1&1&1&0&0&0&2&2&2\\0&2&1&1&0&2&2&1&0\\1&2&0&2&0&1&0&1&2\\
        \end{array}
        \right). $\\
                $A_{50}=\left(
        \begin{array}{ccccccccc }
1&2&0&1&2&0&1&2&0\\0&0&0&2&2&2&1&1&1\\2&1&0&0&2&1&1&0&2\\0&1&2&1&2&0&2&0&1\\
        \end{array}
        \right)$,
        $A_{51}=\left(
        \begin{array}{ccccccccc }
1&2&0&1&2&0&1&2&0\\1&1&1&0&0&0&2&2&2\\0&2&1&1&0&2&2&1&0\\1&2&0&2&0&1&0&1&2\\
        \end{array}
        \right)$,
                $A_{52}=\left(
        \begin{array}{ccccccccc }
1&2&0&1&2&0&1&2&0\\2&2&2&1&1&1&0&0&0\\1&0&2&2&1&0&0&2&1\\2&0&1&0&1&2&1&2&0\\
        \end{array}
        \right)$,\\
                $A_{53}=\left(
        \begin{array}{ccccccccc }
2&0&1&2&0&1&2&0&1\\0&0&0&2&2&2&1&1&1\\0&2&1&1&0&2&2&1&0\\2&0&1&0&1&2&1&2&0\\
        \end{array}
        \right)$,
                $A_{54}=\left(
        \begin{array}{ccccccccc }
2&0&1&2&0&1&2&0&1\\1&1&1&0&0&0&2&2&2\\1&0&2&2&1&0&0&2&1\\0&1&2&1&2&0&2&0&1\\
        \end{array}
        \right)$,
                $A_{55}=\left(
        \begin{array}{ccccccccc }
2&0&1&2&0&1&2&0&1\\2&2&2&1&1&1&0&0&0\\2&1&0&0&2&1&1&0&2\\1&2&0&2&0&1&0&1&2\\
        \end{array}
        \right)$,\\
                $A_{56}=\left(
        \begin{array}{ccccccccc }
0&1&2&0&1&2&0&1&2\\0&0&0&2&2&2&1&1&1\\1&0&2&2&1&0&0&2&1\\1&2&0&2&0&1&0&1&2\\
        \end{array}
        \right)$,
                $A_{57}=\left(
        \begin{array}{ccccccccc }
0&1&2&0&1&2&0&1&2\\1&1&1&0&0&0&2&2&2\\2&1&0&0&2&1&1&0&2\\2&0&1&0&1&2&1&2&0\\
        \end{array}
        \right)$,
                $A_{58}=\left(
        \begin{array}{ccccccccc }
0&1&2&0&1&2&0&1&2\\2&2&2&1&1&1&0&0&0\\0&2&1&1&0&2&2&1&0\\0&1&2&1&2&0&2&0&1\\
        \end{array}
        \right). $\\
                $A_{60}=\left(
        \begin{array}{ccccccccc }
0&1&2&0&1&2&0&1&2\\2&2&2&1&1&1&0&0&0\\2&1&0&0&2&1&1&0&2\\0&1&2&1&2&0&2&0&1\\
        \end{array}
        \right)$,
        $A_{61}=\left(
        \begin{array}{ccccccccc }
0&1&2&0&1&2&0&1&2\\0&0&0&2&2&2&1&1&1\\0&2&1&1&0&2&2&1&0\\1&2&0&2&0&1&0&1&2\\
        \end{array}
        \right)$,
                $A_{62}=\left(
        \begin{array}{ccccccccc }
0&1&2&0&1&2&0&1&2\\1&1&1&0&0&0&2&2&2\\1&0&2&2&1&0&0&2&1\\2&0&1&0&1&2&1&2&0\\
        \end{array}
        \right)$,\\
                $A_{63}=\left(
        \begin{array}{ccccccccc }
1&2&0&1&2&0&1&2&0\\2&2&2&1&1&1&0&0&0\\0&2&1&1&0&2&2&1&0\\2&0&1&0&1&2&1&2&0\\
        \end{array}
        \right)$,
                $A_{64}=\left(
        \begin{array}{ccccccccc }
1&2&0&1&2&0&1&2&0\\0&0&0&2&2&2&1&1&1\\1&0&2&2&1&0&0&2&1\\0&1&2&1&2&0&2&0&1\\
        \end{array}
        \right)$,
                $A_{65}=\left(
        \begin{array}{ccccccccc }
1&2&0&1&2&0&1&2&0\\1&1&1&0&0&0&2&2&2\\2&1&0&0&2&1&1&0&2\\1&2&0&2&0&1&0&1&2\\
        \end{array}
        \right)$,\\
                $A_{66}=\left(
        \begin{array}{ccccccccc }
2&0&1&2&0&1&2&0&1\\2&2&2&1&1&1&0&0&0\\1&0&2&2&1&0&0&2&1\\1&2&0&2&0&1&0&1&2\\
        \end{array}
        \right)$,
                $A_{67}=\left(
        \begin{array}{ccccccccc }
2&0&1&2&0&1&2&0&1\\0&0&0&2&2&2&1&1&1\\2&1&0&0&2&1&1&0&2\\2&0&1&0&1&2&1&2&0\\
        \end{array}
        \right)$,
                $A_{68}=\left(
        \begin{array}{ccccccccc }
2&0&1&2&0&1&2&0&1\\1&1&1&0&0&0&2&2&2\\0&2&1&1&0&2&2&1&0\\0&1&2&1&2&0&2&0&1\\
        \end{array}
        \right). $\\
                $A_{70}=\left(
        \begin{array}{ccccccccc }
1&2&0&1&2&0&1&2&0\\0&0&0&2&2&2&1&1&1\\2&1&0&0&2&1&1&0&2\\2&0&1&0&1&2&1&2&0\\
        \end{array}
        \right)$,
        $A_{71}=\left(
        \begin{array}{ccccccccc }
1&2&0&1&2&0&1&2&0\\1&1&1&0&0&0&2&2&2\\0&2&1&1&0&2&2&1&0\\0&1&2&1&2&0&2&0&1\\
        \end{array}
        \right)$,
                $A_{72}=\left(
        \begin{array}{ccccccccc }
1&2&0&1&2&0&1&2&0\\2&2&2&1&1&1&0&0&0\\1&0&2&2&1&0&0&2&1\\1&2&0&2&0&1&0&1&2\\
        \end{array}
        \right)$,\\
                $A_{73}=\left(
        \begin{array}{ccccccccc }
2&0&1&2&0&1&2&0&1\\0&0&0&2&2&2&1&1&1\\0&2&1&1&0&2&2&1&0\\1&2&0&2&0&1&0&1&2\\
        \end{array}
        \right)$,
                $A_{74}=\left(
        \begin{array}{ccccccccc }
2&0&1&2&0&1&2&0&1\\1&1&1&0&0&0&2&2&2\\1&0&2&2&1&0&0&2&1\\2&0&1&0&1&2&1&2&0\\
        \end{array}
        \right)$,
                $A_{75}=\left(
        \begin{array}{ccccccccc }
2&0&1&2&0&1&2&0&1\\2&2&2&1&1&1&0&0&0\\2&1&0&0&2&1&1&0&2\\0&1&2&1&2&0&2&0&1\\
        \end{array}
        \right)$,\\
                $A_{76}=\left(
        \begin{array}{ccccccccc }
0&1&2&0&1&2&0&1&2\\0&0&0&2&2&2&1&1&1\\1&0&2&2&1&0&0&2&1\\0&1&2&1&2&0&2&0&1\\
        \end{array}
        \right)$,
                $A_{77}=\left(
        \begin{array}{ccccccccc }
0&1&2&0&1&2&0&1&2\\1&1&1&0&0&0&2&2&2\\2&1&0&0&2&1&1&0&2\\1&2&0&2&0&1&0&1&2\\
        \end{array}
        \right)$,
                $A_{78}=\left(
        \begin{array}{ccccccccc }
0&1&2&0&1&2&0&1&2\\2&2&2&1&1&1&0&0&0\\0&2&1&1&0&2&2&1&0\\2&0&1&0&1&2&1&2&0\\
        \end{array}
        \right). $\\
                $A_{80}=\left(
        \begin{array}{ccccccccc }
2&0&1&2&0&1&2&0&1\\1&1&1&0&0&0&2&2&2\\2&1&0&0&2&1&1&0&2\\1&2&0&2&0&1&0&1&2\\
        \end{array}
        \right)$,
        $A_{81}=\left(
        \begin{array}{ccccccccc }
2&0&1&2&0&1&2&0&1\\2&2&2&1&1&1&0&0&0\\0&2&1&1&0&2&2&1&0\\2&0&1&0&1&2&1&2&0\\
        \end{array}
        \right)$,
                $A_{82}=\left(
        \begin{array}{ccccccccc }
2&0&1&2&0&1&2&0&1\\0&0&0&2&2&2&1&1&1\\1&0&2&2&1&0&0&2&1\\0&1&2&1&2&0&2&0&1\\
        \end{array}
        \right)$,\\
                $A_{83}=\left(
        \begin{array}{ccccccccc }
0&1&2&0&1&2&0&1&2\\1&1&1&0&0&0&2&2&2\\0&2&1&1&0&2&2&1&0\\0&1&2&1&2&0&2&0&1\\
        \end{array}
        \right)$,
                $A_{84}=\left(
        \begin{array}{ccccccccc }
0&1&2&0&1&2&0&1&2\\2&2&2&1&1&1&0&0&0\\1&0&2&2&1&0&0&2&1\\1&2&0&2&0&1&0&1&2\\
        \end{array}
        \right)$,
                $A_{85}=\left(
        \begin{array}{ccccccccc }
0&1&2&0&1&2&0&1&2\\0&0&0&2&2&2&1&1&1\\2&1&0&0&2&1&1&0&2\\2&0&1&0&1&2&1&2&0\\
        \end{array}
        \right)$,\\
                $A_{86}=\left(
        \begin{array}{ccccccccc }
1&2&0&1&2&0&1&2&0\\1&1&1&0&0&0&2&2&2\\1&0&2&2&1&0&0&2&1\\2&0&1&0&1&2&1&2&0\\
        \end{array}
        \right)$,
                $A_{87}=\left(
        \begin{array}{ccccccccc }
1&2&0&1&2&0&1&2&0\\2&2&2&1&1&1&0&0&0\\2&1&0&0&2&1&1&0&2\\0&1&2&1&2&0&2&0&1\\
        \end{array}
        \right)$,
                $A_{88}=\left(
        \begin{array}{ccccccccc }
1&2&0&1&2&0&1&2&0\\0&0&0&2&2&2&1&1&1\\0&2&1&1&0&2&2&1&0\\1&2&0&2&0&1&0&1&2\\
        \end{array}
        \right). $
\end{center}}}

\vskip5pt
 \noindent
        For any $i\in I_9$, $X+H_i$ and $Y+H_i^{*}$ both run  over $\mathbb{Z}_3^2$ when $X$ and $Y$ both run over $\mathbb{Z}_3^2$.
        Since $E$ is nonsingular and $E_1, E_2, E_1+E_2, E_1-E_2\in M_{4\times 2}^{(2)}(\mathbb{Z}_3)$,  $\{A_{i0},A_{i1},\cdots, A_{i8}\}$ is an SDLOA$(2,4,3)$ by the proof of Lemma \ref{OA1}. Thus $C_i$ forms an SDLOA$(9; 2, 4, 3)$ $\{A_{i0},A_{i1},\cdots, A_{i8}\}$ for each $i\in I_9$.
   One can check that the 9 SDLOA$(2,4,3)$s have the following properties:

 \noindent (1)  for fixed $k\in I_9$,   the $A_{0k}, A_{1k}, \cdots,A_{8k}$ forms a LOA$(2,4,3)$;

  \noindent   (2)  for fixed $l\in I_9$,   the $l$-th column of all $A_{ik}(i,k\in I_9)$  forms a LOA$(2,4,3)$;

  \noindent   (3) all the $k$-th column of $A_{ik}$, $i,k\in I_9$ forms a LOA$(2,4,3)$, and the $(8-k)$-th column of $A_{ik}$, $i,k\in I_9$ forms a LOA$(2,4,3)$.

  For any $C_i$, replacing  the arbitrary $(k,l)$-entry $(u,v,x,y)$ of $C_i$  by  $u+3v+9x+27y$, we get a matrix $\hat{C}_i$.
    By the proof of Construction \ref{main2},  $\hat{C}_0$, $\hat{C}_1$, $\cdots$ ,$\hat{C}_{8}$ are MS$(9,2)$s. We list $\hat{C}_0,\hat{C}_1,\cdots,\hat{C}_8$ as follows.

{{\renewcommand\arraystretch{0.8}
\setlength{\arraycolsep}{0.7pt}
\footnotesize
\begin{center}
$\hat{C}_0=\left(
        \begin{array}{ccccccccc }
46&65&0&61&26&42&13&32&75\\
58&23&39&10&29&72&52&71&6\\
16&35&78&49&68&3&55&20&36\\
2&45&64&44&60&25&77&12&31\\
41&57&22&74&9&28&8&51&70\\
80&15&34&5&48&67&38&54&19\\
63&1&47&24&43&62&30&76&14\\
21&40&59&27&73&11&69&7&53\\
33&79&17&66&4&50&18&37&56
        \end{array}
        \right)$,
        $\hat{C}_1=\left(
        \begin{array}{ccccccccc }
23&39&58&29&72&10&71&6&52\\
35&78&16&68&3&49&20&36&55\\
65&0&46&26&42&61&32&75&13\\
57&22&41&9&28&74&51&70&8\\
15&34&80&48&67&5&54&19&38\\
45&64&2&60&25&44&12&31&77\\
40&59&21&73&11&27&7&53&69\\
79&17&33&4&50&66&37&56&18\\
1&47&63&43&62&24&76&14&30
        \end{array}
        \right)$,
                $\hat{C}_2=\left(
        \begin{array}{ccccccccc }
78&16&35&3&49&68&36&55&20\\
0&46&65&42&61&26&75&13&32\\
39&58&23&72&10&29&6&52&71\\
34&80&15&67&5&48&19&38&54\\
64&2&45&25&44&60&31&77&12\\
22&41&57&28&74&9&70&8&51\\
17&33&79&50&66&4&56&18&37\\
47&63&1&62&24&43&14&30&76\\
59&21&40&11&27&73&53&69&7\\
        \end{array}
        \right)$,\\
                $\hat{C}_3=\left(
        \begin{array}{ccccccccc }
77&12&31&2&45&64&44&60&25\\
8&51&70&41&57&22&74&9&28\\
38&54&19&80&15&34&5&48&67\\
30&76&14&63&1&47&24&43&62\\
69&7&53&21&40&59&27&73&11\\
18&37&56&33&79&17&66&4&50\\
13&32&75&46&65&0&61&26&42\\
52&71&6&58&23&39&10&29&72\\
55&20&36&16&35&78&49&68&3\\
        \end{array}
        \right)$,
                $\hat{C}_4=\left(
        \begin{array}{ccccccccc }
51&70&8&57&22&41&9&28&74\\
54&19&38&15&34&80&48&67&5\\
12&31&77&45&64&2&60&25&44\\
7&53&69&40&59&21&73&11&27\\
37&56&18&79&17&33&4&50&66\\
76&14&30&1&47&63&43&62&24\\
71&6&52&23&39&58&29&72&10\\
20&36&55&35&78&16&68&3&49\\
32&75&13&65&0&46&26&42&61
        \end{array}
        \right)$,
                $\hat{C}_5=\left(
        \begin{array}{ccccccccc }
19&38&54&34&80&15&67&5&48\\
31&77&12&64&2&45&25&44&60\\
70&8&51&22&41&57&28&74&9\\
56&18&37&17&33&79&50&66&4\\
14&30&76&47&63&1&62&24&43\\
53&69&7&59&21&40&11&27&73\\
36&55&20&78&16&35&3&49&68\\
75&13&32&0&46&65&42&61&26\\
6&52&71&39&58&23&72&10&29
        \end{array}
        \right)$,\\
                $\hat{C}_6=\left(
        \begin{array}{ccccccccc }
24&43&62&30&76&14&63&1&47\\
27&73&11&69&7&53&21&40&59\\
66&4&50&18&37&56&33&79&17\\
61&26&42&13&32&75&46&65&0\\
10&29&72&52&71&6&58&23&39\\
49&68&3&55&20&36&16&35&78\\
44&60&25&77&12&31&2&45&64\\
74&9&28&8&51&70&41&57&22\\
5&48&67&38&54&19&80&15&34
        \end{array}
        \right)$,
                $\hat{C}_7=\left(
        \begin{array}{ccccccccc }
73&11&27&7&53&69&40&59&21\\
4&50&66&37&56&18&79&17&33\\
43&62&24&76&14&30&1&47&63\\
29&72&10&71&6&52&23&39&58\\
68&3&49&20&36&55&35&78&16\\
26&42&61&32&75&13&65&0&46\\
9&28&74&51&70&8&57&22&41\\
48&67&5&54&19&38&15&34&80\\
60&25&44&12&31&77&45&64&2
        \end{array}
        \right)$,
                $\hat{C}_8=\left(
        \begin{array}{ccccccccc }
50&66&4&56&18&37&17&33&79\\
62&24&43&14&30&76&47&63&1\\
11&27&73&53&69&7&59&21&40\\
3&49&68&36&55&20&78&16&35\\
42&61&26&75&13&32&0&46&65\\
72&10&29&6&52&71&39&58&23\\
67&5&48&19&38&54&34&80&15\\
25&44&60&31&77&12&64&2&45\\
28&74&9&70&8&51&22&41&57
        \end{array}
        \right).$
\end{center}}}

 \noindent By the property (1),  for fixed $k\in I_9$,  the union of the columns  of  $A_{0k}, A_{1k}, \cdots,A_{8k}$ is exactly the set  $F_3^4$.
 So the numbers of all the $k$-th rows of   $\hat{C}_0$, $\hat{C}_1$, $\cdots$ ,$\hat{C}_{8}$ forms the set $I_{81}$  exactly, which is independent of the choice of $k$.  So the condition  $R_1$ in the definition of CMS  is  satisfied. In the same way, by the  properties (2) and (3) the conditions $R_2$ and $R_3$ are satisfied, respectively. Thus $\{\hat{C}_0, \hat{C}_1, \cdots, \hat{C}_8\}$ is a 9-CMS$(9,2)$.
   \end{proof}

We now give a sufficient condition of $q^t$-CMS$(q^t,t)$ based on LOA while keeping the idea of 9-CMS$(9,2)$ in mind.

 \begin{lem} \label{LOA-CMS}
 If there exist $q^t$ SDLOA$(q^t; t, 2t, q)$s over $F_q$,  $C_H=(C_{X,Y}^{(H)})$, $X, Y, H\in F_q^t$, such that
\begin{center}
$\begin{aligned}
  &R_X=\{C^{(H)}_{X,Y}| Y, H\in F_q^t\}, X\in F_q^t,\\
  &T_Y=\{C^{(H)}_{X,Y}| X, H\in F_q^t\}, Y\in F_q^t,\\
  &U=\{C^{(H)}_{X,X}| X,H\in F_q^t\},\\
  &U'=\{C^{(H)}_{X,\widetilde{X}-X}| X,H\in F_q^t\}
    \end{aligned}$
\end{center}
are $2q^t+2$ LOA$(q^t; t, 2t, q)$s, then there exists a  $q^t$-CMS$(q^t,t)$.
 \end{lem}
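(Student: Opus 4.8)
The plan is to carry out, for general parameters, the same argument that proves Lemma~\ref{cms9}. Fix once and for all the bijection $\phi\colon F_q^{2t}\to I_{q^{2t}}$ that underlies Construction~\ref{main2}: with $F_q=\{\xi_0,\xi_1,\dots,\xi_{q-1}\}$ as in the paragraph preceding Lemma~\ref{cms9}, set $\phi\bigl((\xi_{a_0},\xi_{a_1},\dots,\xi_{a_{2t-1}})^{T}\bigr)=\sum_{i=0}^{2t-1}a_iq^{i}$. For each $H\in F_q^t$, view $C_H$ as a $q^t\times q^t$ matrix whose entries are columns of $F_q^{2t}$, and let $\widehat{C}_H$ be obtained by replacing every entry $C^{(H)}_{X,Y}$ with the integer $\phi\bigl(C^{(H)}_{X,Y}\bigr)$. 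Since each $C_H$ is an SDLOA$(q^t;t,2t,q)$, the proof of Construction~\ref{main2} shows that each $\widehat{C}_H$ is an MS$(q^t,t)$ on the consecutive integers $I_{q^{2t}}$. It then remains only to check conditions $(R1)$, $(R2)$, $(R3)$ of the definition of CMS for $\{\widehat{C}_H:H\in F_q^t\}$ with $m=n=q^t$ and exponent $t+1$; that family is then a $q^t$-CMS$(q^t,t)$.

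The one fact that does all the work is: whenever $q^t$ copies of OA$(q^t;t,2t,q)$ form an LOA$(q^t;t,2t,q)$, their union is a trivial OA$(q^{2t};t,2t,q)$, so every column of $F_q^{2t}$ occurs in it exactly once; pushing this forward through $\phi$, the associated multiset of integers is precisely $I_{q^{2t}}$, each value once. Together with the identity $q^t\,S_{t+1}(q^t)=\sum_{k\in I_{q^{2t}}}k^{t+1}$, which is just the definition of $S_{t+1}(q^t)$, this reduces every CMS power-sum to $\sum_{k\in I_{q^{2t}}}k^{t+1}$.

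For $(R1)$, fix a row index $X\in F_q^t$; as $H$ ranges over $F_q^t$ the $X$-th rows of the $C_H$ are the $q^t$ orthogonal arrays constituting the LOA $R_X$, so $\{C^{(H)}_{X,Y}:Y,H\in F_q^t\}$ covers each column of $F_q^{2t}$ exactly once, hence $\sum_{H}\sum_{Y}\bigl(\phi(C^{(H)}_{X,Y})\bigr)^{t+1}=\sum_{k\in I_{q^{2t}}}k^{t+1}=q^t S_{t+1}(q^t)$, which is $(R1)$. Condition $(R2)$ is the identical computation with columns replacing rows and $T_Y$ replacing $R_X$. For $(R3)$, the main diagonal of $\widehat{C}_H$ consists of the entries $\phi(C^{(H)}_{X,X})$, and --- since $\xi_a+\xi_{q-1-a}=\xi_{q-1}$ makes the $I_{q^t}$-index complementary to $X$ equal to the vector $\widetilde{X}-X$ --- the back diagonal consists of the entries $\phi(C^{(H)}_{X,\widetilde{X}-X})$; as $U$ and $U'$ are LOAs, both $\{C^{(H)}_{X,X}:X,H\in F_q^t\}$ and $\{C^{(H)}_{X,\widetilde{X}-X}:X,H\in F_q^t\}$ exhaust $F_q^{2t}$ exactly once, so each diagonal $(t+1)$-th power sum equals $q^t S_{t+1}(q^t)$, which gives $(R3)$ (and in particular the two are equal).

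The argument is almost entirely bookkeeping. The only step requiring genuine care is the back-diagonal identification in $(R3)$: one must confirm that, under the identification $I_{q^t}\leftrightarrow F_q^t$ used to index the rows and columns of the $q^t\times q^t$ matrix, the index $q^t-1-\ell$ corresponds to $\widetilde{X}-X$ whenever $\ell$ corresponds to $X$. This is exactly what the relation $\xi_a+\xi_{q-1-a}=\xi_{q-1}$ and the notation $\widetilde{X}$ were prepared for, and it is why the hypothesis must be stated with $U'=\{C^{(H)}_{X,\widetilde{X}-X}\}$. Beyond that there is no real obstacle; the substance of the lemma is simply ``the union of an LOA is everything, so every orbit power-sum equals $\sum_{k\in I_{q^{2t}}}k^{t+1}$''.
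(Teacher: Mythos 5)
Your proposal is correct and follows essentially the same route as the paper's proof: apply the base-$q$ encoding $\phi$ entrywise, invoke the proof of Construction \ref{main2} to see each $\widehat{C}_H$ is an MS$(q^t,t)$, and then use the hypothesis that $R_X$, $T_Y$, $U$, $U'$ are LOAs so that each relevant family of entries exhausts $I_{q^{2t}}$ exactly once, giving the $(t+1)$-st power sums $q^tS_{t+1}(q^t)$ required by $(R1)$--$(R3)$. Your explicit check that the back-diagonal column index $q^t-1-\ell$ corresponds to $\widetilde{X}-X$ via $\xi_a+\xi_{q-1-a}=\xi_{q-1}$ is a detail the paper leaves implicit, but it is the same argument.
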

 \begin{proof}
 Let  $\hat{C}_H=(\hat{C}_{X,Y}^{(H)})$, where
  $$\hat{C}_{X,Y}^{(H)}=j_0+j_1 q+\cdots+j_{2t-1}q^{2t-1}=(1,q,\cdots,q^{2t-1}) (j_0,j_1,\cdots,j_{2t-1})^T$$
 if $C_{X,Y}^{(H)}=(\xi_{j_0}, \xi_{j_1}, \cdots, \xi_{j_{2t-1}})^T,$  $X, Y, H\in F_q^t$. By the proof of Lemma \ref{main2}, $\hat{C}_H, H\in F_q^t$ are $q^t$ MS$(q^t,t)$s. We shall prove that  $\{\hat{C}_H| H\in F_q^t\}$  is a $q^t$-CMS$(q^t,t)$.

 In fact, for fixed $X\in F_q^t$, since $R_X$  forms  an LOA$(q^t; t, 2t,q)$,  we have $$\{\hat{C}^{(H)}_{X,Y}| Y, H\in F_q^t\}=I_{q^{2t}}.$$
 Thus
$  \sum\limits_{H\in F_q^t}\sum\limits_{Y\in F_q^t}(\hat{C}^{(H)}_{X,Y})^{t+1}=\sum\limits_{d=0}^{q^{2t}-1}d^{t+1}, X\in F_q^t.$
 But $\sum\limits_{d=0}^{q^{2t}-1}d^{t+1}=q^tS_{t+1}(q^t)$.  So we have
 \begin{center}
$  \sum\limits_{H\in F_q^t}\sum\limits_{Y\in F_q^t}(\hat{C}^{(H)}_{X,Y})^{t+1} =q^tS_{t+1}(q^t), X\in F_q^t.$
\end{center}
 Similarly, for fixed $Y$,   $T_Y$ forms an  LOA$(q^t; t, 2t, q)$. So we have
 \vskip5pt
$ \  \ \ \ \ \ \ \ \  \ \ \ \ \ \ \  \ \ \ \ \  \ \ \ \ \ \ \ \sum\limits_{H\in F_q^t}\sum\limits_{X\in F_q^t}(\hat{C}^{(H)}_{X,Y})^{t+1} =q^tS_{t+1}(q^t), Y\in F_q^t;\ \ \ \ \ \  \ \ \ \ \  \ \ \ \   \ \   \ \ \ \ \ $
 \vskip5pt
\noindent In the same way, since $U$ and $U'$ both forms  LOA$(q^t; t, 2t, q)$s. So
 \vskip5pt
  $  \  \ \ \ \  \ \ \  \ \ \ \  \ \ \ \ \ \sum\limits_{H\in F_q^t}\sum\limits_{X\in F_q^t}(\hat{C}^{(H)}_{X,X})^{t+1}=\sum\limits_{H\in F_q^t}\sum\limits_{X\in F_q^t}(\hat{C}^{(H)}_{X,\widetilde{X}-X})^{t+1}=q^tS_{t+1}(q^t).$
   \vskip5pt
\noindent So, $\{\hat{C}_H| H\in F_q^t\}$  is a $q^t$-CMS$(q^t,t)$.
  \end{proof}

Now the  technique of matrix is used to construct a $q^t$-CMS$(q^t,t)$.
 \begin{lem} \label{OA2}
Let $t\geq2$ and $d\in F_q$.  If  there exists a  matrix $E=(e_{i,j})_{2t\times 2t}=(E_1, E_2)$ over $F_q$ such that\\
\emph{(1)} $E_1$, $E_2$, $E_1+E_2$, $E_1-E_2$   are  all in  $M_{2t\times t}^{(t)}(F_q)$,\\
\emph{(2)} $E$, $(E_1,E_1+dE_2)$, $(E_2,E_1+dE_2)$, $(E_1+E_2, E_1+dE_2)$, $(E_1-E_2, E_1+dE_2)$ are  nonsingular,
 then  there exists  a $q^t$-CMS$(q^t,t)$.
 \end{lem}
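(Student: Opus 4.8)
The plan is to apply Lemma~\ref{LOA-CMS} to the family of $q^t$ arrays $C_H=(C^{(H)}_{X,Y})$, $X,Y,H\in F_q^t$, given by
\[
C^{(H)}_{X,Y}=E_1(X+H)+E_2(Y+dH)=E_1X+E_2Y+(E_1+dE_2)H .
\]
Two elementary facts will be used repeatedly. First, by the construction in the proof of Lemma~\ref{LOA1}, if $F=(F_1,F_2)$ is any nonsingular $2t\times 2t$ matrix over $F_q$ whose first block $F_1$ lies in $M_{2t\times t}^{(t)}(F_q)$, then the arrays $\{F_1x+F_2c\mid x\in F_q^t\}$, indexed by $c\in F_q^t$, form an LOA$(q^t;t,2t,q)$. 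Second, translating every column of such an LOA by one fixed vector of $F_q^{2t}$, or relabelling its component arrays by an arbitrary bijection, again yields an LOA$(q^t;t,2t,q)$, since the defining property---each $2t$-tuple over $F_q$ occurs as a column of exactly one component array---is unaffected by either operation; and, as in the proof of Lemma~\ref{OA1}, the SDLOA property of a $q^t\times q^t$ matrix is preserved under simultaneous translation of all its rows and of all its columns.

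Next I would check that each $C_H$ is an SDLOA$(q^t;t,2t,q)$ in the matrix sense recalled above. Fix $H$ and put $c_0=(E_1+dE_2)H$. Then the $X$-th row of $C_H$ is $\{E_2Y+(E_1X+c_0)\mid Y\in F_q^t\}$, the $Y$-th column is $\{E_1X+(E_2Y+c_0)\mid X\in F_q^t\}$, the main diagonal is $\{(E_1+E_2)X+c_0\mid X\in F_q^t\}$, and the back diagonal $\{C^{(H)}_{X,\widetilde{X}-X}\mid X\in F_q^t\}$ equals $\{(E_1-E_2)X+(c_0+E_2\widetilde{X})\mid X\in F_q^t\}$; since, by hypothesis~(1), each of $E_1,E_2,E_1+E_2,E_1-E_2$ lies in $M_{2t\times t}^{(t)}(F_q)$, each of these four arrays is an OA$(q^t;t,2t,q)$. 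Moreover the rows of $C_H$, i.e.\ the arrays $\{E_1X+E_2Y+c_0\mid Y\in F_q^t\}$ over $X\in F_q^t$, form the LOA attached to $E=(E_1,E_2)$---nonsingular by hypothesis~(2)---translated by $c_0$, hence an LOA$(q^t;t,2t,q)$. So $C_H$ is an SDLOA$(q^t;t,2t,q)$ for every $H$.

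The final step is to verify the four conditions of Lemma~\ref{LOA-CMS}. From the identity $C^{(H)}_{X,Y}=E_1X+E_2Y+(E_1+dE_2)H$, for fixed $X$ the component array of $R_X$ indexed by $H$ is $\{E_2Y+\big((E_1+dE_2)H+E_1X\big)\mid Y\in F_q^t\}$, so by the first fact above $R_X$ is the LOA attached to $(E_2,E_1+dE_2)$---nonsingular by hypothesis~(2), with first block $E_2\in M_{2t\times t}^{(t)}(F_q)$ by hypothesis~(1)---translated by the fixed vector $E_1X$; hence $R_X$ is an LOA$(q^t;t,2t,q)$ for every $X$. In the same way $T_Y$ arises from the matrix $(E_1,E_1+dE_2)$ translated by $E_2Y$, $U$ from $(E_1+E_2,E_1+dE_2)$, and $U'$ from $(E_1-E_2,E_1+dE_2)$ translated by $E_2\widetilde{X}$; each of these $2t\times 2t$ matrices is nonsingular by hypothesis~(2) and has its first block in $M_{2t\times t}^{(t)}(F_q)$ by hypothesis~(1), so $T_Y$, $U$, $U'$ are all LOA$(q^t;t,2t,q)$. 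Lemma~\ref{LOA-CMS} then produces a $q^t$-CMS$(q^t,t)$.

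The only genuine point, beyond this bookkeeping, is the choice of the shift $Y\mapsto Y+dH$ (rather than, say, $Y\mapsto Y+H$): this is exactly what collapses the four ``second-block'' matrices governing the row-, column-, main-diagonal- and back-diagonal-large sets of the family into the single matrix $E_1+dE_2$, so that hypothesis~(2) becomes precisely the nonsingularity required to turn $R_X$, $T_Y$, $U$ and $U'$ into honest large sets. The place that calls for a little care is the passage from Lemma~\ref{LOA1} to these four large sets: one must note that a column-translate and a relabelling of the component arrays of the LOA of Lemma~\ref{LOA1} is again an LOA, and that the vectors $E_1X$, $E_2Y$, $E_2\widetilde{X}$ occurring above are indeed constant in the variable over which the array in question is formed.
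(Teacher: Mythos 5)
Your proposal is correct and follows essentially the same route as the paper: the same translated family $C^{(H)}_{X,Y}=E_1X+E_2Y+(E_1+dE_2)H$ with $H^{*}=dH$, the same reduction of $R_X$, $T_Y$, $U$, $U'$ to Lemma \ref{LOA1} applied to $(E_2,E_1+dE_2)$, $(E_1,E_1+dE_2)$, $(E_1\pm E_2,E_1+dE_2)$ up to constant translations, and the same final appeal to Lemma \ref{LOA-CMS}. The only difference is presentational: you verify the SDLOA property of each $C_H$ directly, while the paper obtains it by noting $C_H$ is a column-translate of the SDLOA $C$ from the proof of Lemma \ref{OA1}.
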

 \begin{proof}
  Let $C=(C_{X,Y})$, where
$C_{X,Y}=(E_1, E_2)
\left(
\begin{array}{l}
X\\
Y\\
\end{array}
\right),
X,Y\in F_q^t.$
By the proof of Lemma \ref{OA1}, $C$ forms an SDLOA$(q^t; t, 2t, q)$ over $F_q$.
 We shall construct a $q^t$-CMS$(q^t,t)$ by making use of the coordinate translations of $C$.

 Let $H\in F_q^t$ and $H^*=dH$. Let
 \begin{center}
$C_{H}=(C^{(H)}_{X,Y})$, $H\in F_q^t$,
\end{center}
where
\begin{center}
$C^{(H)}_{X,Y}=C_{X+H, Y+H^*}=(E_1, E_2)
\left(
\begin{array}{l}
X+H\\
Y+H^*
\end{array}
\right), \
X,Y\in F_q^t.$
\end{center}
i.e.,
\begin{center}
$C^{(H)}_{X,Y}=E_1X+E_2Y+(E_1+dE_2)H,\ \
X,Y\in F_q^t.$
\end{center}
Let $\hat{C}_H$ be the same as defined in Lemma \ref{LOA-CMS}. We shall show that  $\{\hat{C}_H| H\in F_q^t\}$  is a $q^t$-CMS$(q^t,t)$.

For fixed $H\in F_q^t$, let $K_H=(E_1, E_2)
\left(
\begin{array}{l}
H\\
H^*
\end{array}
\right)$,
then
\begin{center}
$C^{(H)}_{X,Y}=(E_1, E_2)
\left(
\begin{array}{l}
X\\
Y
\end{array}
\right)+K_H,\ \ X,Y\in F_q^t,$
\end{center}
which means that $C_H$ is obtained by adding $K_H$ to each column of the 3-dimensional array $C$. Since $C$ is an SDLOA$(q^t; t, 2t, q)$, it is obvious that
$C_H$ is also an SDLOA$(q^t; t, 2t, q)$.
Let $R_X(X\in F_q^t), T_Y(Y\in F_q^t), U, U'$ be the same as in Lemma \ref{LOA-CMS}.
By Lemma \ref{LOA-CMS}, it remains to prove that $R_X (X\in F_{q^t}), T_Y (Y\in F_{q^t}), U, U'$ are all LOA$(q^t; t, 2t, q)$s.
We have
\begin{center}
$\begin{aligned}
  R_X&=\{E_1X+E_2Y+(E_1+dE_2)H| Y, H\in F_q^t\}\\
  &=\{E_1X+(E_2,E_1+dE_2)
  \left(
\begin{array}{l}
Y\\
H
\end{array}
\right)
  | Y, H\in F_q^t\}, \ \ X\in F_q^t\\
 \end{aligned} $
\end{center}
Since $E_2\in M_{2t\times t}^{(t)}(F_q)$ and $(E_2,E_1+dE_2)$ is nonsingular, $R_X$ forms an  LOA$(q^t; t, 2t, q)$ for each $X\in F_q^t$.
Similarly, since $E_1\in M_{2t\times t}^{(t)}(F_q)$ and $(E_1,E_1+dE_2)$ is nonsingular, $T_Y$  forms an   LOA$(q^t; t, 2t, q)$ for each $Y\in F_q^t$.

Now we consider $U, U'$, by Lemma \ref{LOA1} we have
\begin{center}
$\begin{aligned}
  U&=\{E_1X+E_2X+(E_1+dE_2)H| X, H\in F_q^t\}\\
  &=\{(E_1+E_2,E_1+dE_2)
  \left(
\begin{array}{l}
X\\
H
\end{array}
\right)
  | X, H\in F_q^t\}.\\
 \end{aligned} $
\end{center}
and
\begin{center}
$\begin{aligned}
  U'   &=\{E_1X+E_2( \widetilde{X}-X)+(E_1+dE_2)H|  X, H\in F_q^t\}\\
&=\{(E_1-E_2,  E_1+dE_2)
 \left(\begin{array}{l}
X\\
H
\end{array}
\right)+E_2\widetilde{X} | X, H\in F_q^t\}
   \end{aligned}$
\end{center}
 since $E_1+E_2,E_1-E_2 \in M_{2t\times t}^{(t)}(F_q)$ and $(E_1+E_2,E_1+dE_2)$, $(E_1-E_2, E_1+dE_2)$ are nonsingular, $U, U'$ are both LOA$(q^t; t, 2t, q)$s.
The proof is completed.
 \end{proof}

\begin{lem}\label{generate}
Let $q$ be a prime power and $q\ge 4$. If  $ M_{2t\times t}^{(t)}(F_q)\neq \emptyset$, then there exists a  $q^t$-CMS$(q^t,t)$.
\end{lem}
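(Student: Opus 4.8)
The plan is to apply Lemma~\ref{OA2}. By hypothesis I may fix a matrix $A=(a_{i,j})_{2t\times t}\in M_{2t\times t}^{(t)}(F_q)$; concretely this means that for every $t$-subset $I$ of the $2t$ rows, the corresponding $t\times t$ submatrix $A_{I}$ is nonsingular. The idea is to manufacture $E=(E_1,E_2)$ from $A$ purely by row rescalings, which makes condition~(1) of Lemma~\ref{OA2} automatic, and then to choose the rescaling parameters together with $d$ so that condition~(2) holds as well.

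First I would set $E_1=A$ and $E_2=\mathrm{diag}(w_1,\dots,w_{2t})\,A$ for scalars $w_i\in F_q\setminus\{0,1,-1\}$ still to be specified. Then $E_2$, $E_1+E_2=\mathrm{diag}(1+w_1,\dots,1+w_{2t})\,A$ and $E_1-E_2=\mathrm{diag}(1-w_1,\dots,1-w_{2t})\,A$ are all obtained from $A$ by multiplying each row by a nonzero scalar, hence they again lie in $M_{2t\times t}^{(t)}(F_q)$, so condition~(1) holds for every admissible $w$. The step I expect to be the real point of the proof is the observation that, for any $d\notin\{0,1,-1\}$, all the remaining matrices in condition~(2) collapse to $E=(E_1\mid E_2)$ under elementary column--block operations: subtracting a suitable multiple of the left block from the right block carries $(E_1,E_1+dE_2)$, $(E_2,E_1+dE_2)$, $(E_1+E_2,E_1+dE_2)$ and $(E_1-E_2,E_1+dE_2)$ to $(E_1\mid dE_2)$, $(E_2\mid E_1)$, $(E_1+E_2\mid(d-1)E_2)$ and $(E_1-E_2\mid(d+1)E_2)$ respectively, and then rescaling a block, adding or subtracting one block from the other, and permuting the two blocks turns each of these into $(E_1\mid E_2)$; the only scalars one has to invert along the way are $d$, $d-1$ and $d+1$. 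Consequently, fixing any $d\in F_q\setminus\{0,1,-1\}$ (which exists because $q\ge4$), condition~(2) is equivalent to the single requirement that $\bigl(A\mid \mathrm{diag}(w_1,\dots,w_{2t})\,A\bigr)$ be nonsingular.

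It then remains to choose $w_1,\dots,w_{2t}\in F_q\setminus\{0,1,-1\}$ with $\det\bigl(A\mid \mathrm{diag}(w_1,\dots,w_{2t})\,A\bigr)\ne0$. Viewing this determinant as a function $P(w)$ of $w=(w_1,\dots,w_{2t})$, the Laplace expansion along the last $t$ columns shows that $P$ is homogeneous and multilinear of degree $t$, and that the coefficient of $\prod_{i\in I}w_i$ (for a $t$-subset $I$, with complementary $t$-subset $I^{c}$) equals, up to sign, $\det(A_{I})\det(A_{I^{c}})$. Since $A\in M_{2t\times t}^{(t)}(F_q)$, every one of these coefficients is nonzero, so $P$ is a nonzero multilinear polynomial; by the multilinear case of the Combinatorial Nullstellensatz (equivalently, a short induction on the number of variables) $P$ cannot vanish identically on $S\times\cdots\times S$ ($2t$ factors) for any $S\subseteq F_q$ with $|S|\ge2$. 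Taking $S=F_q\setminus\{0,1,-1\}$, which has at least two elements for every prime power $q\ge4$ (note $-1=1$ when $q$ is even), yields the desired $w$.

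With $E=(E_1\mid E_2)$ and the chosen $d$, both conditions of Lemma~\ref{OA2} are met, so a $q^t$-CMS$(q^t,t)$ exists, which proves the statement. The hypothesis $q\ge4$ is used only to ensure that $F_q\setminus\{0,1,-1\}$ is large enough to supply both the $w_i$ and the parameter $d$; what keeps this requirement down to $|S|\ge2$, rather than forcing $q$ to be of order $t$, is precisely the column-operation reduction of condition~(2) together with the multilinearity of $P$ and the nonvanishing of all its coefficients (a direct Schwartz--Zippel bound on $P$ would instead need $q$ larger than a constant multiple of $t$).
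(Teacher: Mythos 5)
Your argument is correct and it shares the paper's skeleton---invoke Lemma \ref{OA2} with $E_1=A$ and $E_2$ a row rescaling of $A$, so that condition (1) is automatic---but you handle condition (2) by a genuinely different mechanism. The paper makes one explicit choice: with $x$ a primitive element it takes $w_i=x$ on the top $t$ rows, $w_i=x^2$ on the bottom $t$ rows, and $d=x$, after which each of the five determinants in condition (2) factors as $\det(E_{11})\det(E_{21})$ times the $t$-th power of an explicit scalar such as $x^2-x$, $x(x-1)^2$ or $x(x-1)(x+1)$. You instead leave all $2t$ scalars $w_i$ free, show by block column operations that for any fixed $d\notin\{0,1,-1\}$ the last four requirements in (2) are each equivalent to nonsingularity of $E=(E_1,E_2)$ itself, and then obtain suitable $w_i$ non-constructively: the Laplace expansion exhibits $\det\bigl(A\mid\mathrm{diag}(w)A\bigr)$ as a multilinear polynomial whose coefficients $\pm\det(A_I)\det(A_{I^c})$ are all nonzero because $A\in M^{(t)}_{2t\times t}(F_q)$, so it cannot vanish on $S^{2t}$ once $|S|\ge2$, and $S=F_q\setminus\{0,1,-1\}$ has at least two elements for every prime power $q\ge4$. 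The paper's route buys an explicit matrix and a verification that is a short direct computation; yours buys a cleaner logical structure (the reduction of the five conditions to one is reusable) and more robustness---indeed, your flexibility quietly repairs a small defect in the published choice: for $q=5$ every primitive element satisfies $x^2=-1$, so $1+x^2=0$ and the paper's $E_1+E_2$ has zero bottom half, contradicting the claim $x,x^2\notin\{0,1,-1\}$ and forcing an adjustment of the exponents there, whereas your argument covers $q=5$ with no change.
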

\begin{proof}
  Let $x$ be a primitive element in  $F_q$.
  Since $q\geq4$, which indicates that $x,x^2\not\in\{ 0, 1,-1\}$ and $x\neq x^2$. Suppose that $E_{11}, E_{21}$ are two $t\times t$ matrices such that
  $E_1=\left(
        \begin{subarray}{}
E_{11} \\
E_{21}
        \end{subarray}
        \right)\in M_{2t\times t}^{(t)}(F_q)$.
 Let
  \begin{center}
      $E=(E_{1},\  E_{2})=\left(
        \begin{array}{l}
E_{11} \ \ E_{12} \\
E_{21} \ \ E_{22}
        \end{array}
        \right)
        =\left(
        \begin{array}{l}
E_{11} \ \ xE_{11} \\
E_{21} \ \ x^2E_{21}
        \end{array}
        \right),$
 \end{center}
and $d=x$. It is readily checked that $E$ satisfies the conditions in Lemma \ref{OA2}.
Therefore there exists a  $q^t$-CMS$(q^t,t)$ by Lemma \ref{OA2}.
   \end{proof}

 \begin{thm}\label{maincms}
There exists a $q^t$-CMS$(q^t,t)$ for any prime power $q\geq 2t-1$ with $t\geq 2$.
    \end{thm}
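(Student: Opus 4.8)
The plan is to combine Lemma \ref{generate} with a nonemptiness criterion for $M_{2t\times t}^{(t)}(F_q)$ and then clean up the small cases that Lemma \ref{generate} does not cover (namely $q\le 3$). First I would observe that the only hypothesis needed in Lemma \ref{generate}, besides $q\ge 4$, is that $M_{2t\times t}^{(t)}(F_q)\neq\emptyset$, i.e. that there is a $2t\times t$ matrix over $F_q$ every $t$ of whose rows are linearly independent. The standard way to produce such a matrix is to use a Vandermonde/MDS construction: since $q\ge 2t$ (which follows from $q\ge 2t-1$ together with $q\ge 4$ once one checks the boundary $q=2t-1$ separately), pick $2t$ distinct elements $\alpha_1,\dots,\alpha_{2t}\in F_q$ and let the $i$-th row of $E_1$ be $(1,\alpha_i,\alpha_i^2,\dots,\alpha_i^{t-1})$. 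Any $t$ of these rows form a $t\times t$ Vandermonde matrix with distinct nodes, hence nonsingular, so $E_1\in M_{2t\times t}^{(t)}(F_q)$. This shows $M_{2t\times t}^{(t)}(F_q)\neq\emptyset$ whenever $q\ge 2t$.

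Next I would invoke Lemma \ref{generate}: for every prime power $q$ with $q\ge 4$ and $q\ge 2t$, we get a $q^t$-CMS$(q^t,t)$. It remains to reconcile this with the stated range $q\ge 2t-1$. When $t\ge 2$ and $q\ge 2t-1$, the only pairs $(q,t)$ not already handled are those with $q<4$ or $q=2t-1<2t$. The case $q=2t-1$: here we need $M_{2t\times t}^{(t)}(F_q)\neq\emptyset$ with only $q=2t-1$ elements available, so the naive Vandermonde argument with distinct nodes gives only $q$ rows, not $2t=q+1$. The fix is the classical fact that one can extend a Vandermonde-type matrix by one extra "point at infinity" row: over $F_q$ the $(q+1)\times k$ generator matrix of an extended Reed–Solomon (doubly-extended RS) code has the property that any $k$ rows are independent, as long as $k\le q+1$. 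With $k=t$ and $q+1=2t$, the condition $t\le q+1=2t$ holds trivially, so $M_{2t\times t}^{(t)}(F_{2t-1})\neq\emptyset$, and Lemma \ref{generate} applies provided also $2t-1\ge 4$, i.e. $t\ge 3$. The small residual cases are $t=2,q=3$ and $q\in\{2,3\}$ generally; for $t=2$ with $q\ge 3$ a prime power the range $q\ge 2t-1=3$ needs $q=3$ to be treated, and a $9$-CMS$(9,2)$ is exactly Lemma \ref{cms9}, which disposes of it directly. (For $t=2$, $q\ge 4$ is covered by Lemma \ref{generate}.)

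So the argument is: for $t\ge 3$ and $q\ge 2t-1$ we have $q\ge 5\ge 4$, and $M_{2t\times t}^{(t)}(F_q)\neq\emptyset$ by the (possibly extended) Vandermonde construction since $q+1\ge 2t\ge t$; hence a $q^t$-CMS$(q^t,t)$ exists by Lemma \ref{generate}. For $t=2$: if $q\ge 4$ we use Lemma \ref{generate} again (with $M_{4\times 2}^{(2)}(F_q)\neq\emptyset$, which just says there exist $4$ vectors in $F_q^2$, pairwise linearly independent — available once $q\ge 3$, or via the extended RS code when $q=3$), and if $q=3$ we use Lemma \ref{cms9}. This covers all prime powers $q\ge 2t-1$ with $t\ge 2$, completing the proof.

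**Main obstacle.** The routine steps are the Vandermonde nonsingularity checks and the bookkeeping of which $(q,t)$ fall outside $q\ge 4$. The one genuinely delicate point is the boundary $q=2t-1$, where exactly $q+1=2t$ rows are required from an alphabet of size $q$; here I must appeal to the doubly-extended Reed–Solomon construction (adjoining a "slope at infinity" row $(0,\dots,0,1)$ to a Vandermonde block) rather than plain distinct-node Vandermonde, and verify that any $t$ of the resulting $2t$ rows remain independent — equivalently that the extended RS code has minimum distance $q+2-t$, which holds precisely because $t\le q+1$. I would state this as a short lemma (existence of $[q+1,k]$ MDS codes over $F_q$ for $1\le k\le q+1$) and cite it, since it is standard.
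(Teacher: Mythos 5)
Your proposal is correct and follows essentially the same route as the paper: the case $(q,t)=(3,2)$ is settled by Lemma \ref{cms9}, and for $q\geq\max\{2t-1,4\}$ a matrix in $M_{2t\times t}^{(t)}(F_q)$ is fed into Lemma \ref{generate}. The paper's explicit matrix is exactly the extended Vandermonde (Reed--Solomon) matrix you describe --- rows at the $2t-1$ nodes $0,x,x^2,\dots,x^{2t-2}$ together with the ``infinity'' row $(0,\dots,0,1)$ --- so it covers $q=2t-1$ and $q\geq 2t$ uniformly instead of splitting off the boundary case, which is only a cosmetic difference from your argument.
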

  \begin{proof}
For $q=3$ and $t=2$, there exist  $9$-CMS$(9,2)$ by Lemma \ref{cms9}. For $q\geq max\{2t-1,4\} $ and $t\geq 2$, let $x$ be a primitive element of $F_q$ and
 {\renewcommand\arraystretch{0.7}
\setlength{\arraycolsep}{1.8pt}
 \footnotesize
\begin{center}
$E=
        \left(
        \begin{array}{cccccc}
1& 0& 0&\cdots &0\\
0& 0&0& \cdots&1\\
1& x& x^2&\cdots &x^{t-1}\\
1& x^2&  x^4&\cdots&x^{2(t-1)}\\
\vdots&\vdots&\vdots&\vdots&\vdots\\
1& x^{2t-2}&x^{2(2t-2)}&\cdots& x^{(t-1)(2t-2)}\\
        \end{array}
        \right). $
\end{center}}
\noindent
It is readily checked that any $t$ rows of $E$ are linearly independent, so $E\in M_{2t\times t}^{(t)}(F_q)$. Thus, there exists a  $q^t$-CMS$(q^t,t)$  by Lemma  \ref{generate}.
   \end{proof}

 \section{Main results}

\noindent {\bf Theorem \ref{q2t-1}}
There exists an MS$(q^{2t-1},t)$ for any prime power $q\geq 2t-1$ with $t\geq3$.
\begin{proof}
 For any prime power $q\geq 2t-1$ with $t\geq3$,   there exists an MS$(q^t,t)$ by Lemma \ref{zhang} and exists an  $q^{t-1}$-CMS$(q^{t-1},t-1)$   by Theorem \ref{maincms}. So  there exists an MS$(q^{2t-1},t)$ by Construction \ref{conmscms}.
\end{proof}

\noindent {\bf Remark}   The MS$(q^{2t-1},2t-1)$s coming from Theorem \ref{q2t-1} with prime power $q<20$ are listed as follows.
{\renewcommand\arraystretch{0.8}
\setlength{\arraycolsep}{1.8pt}
 \footnotesize
\begin{center}
$\begin{array}{cccccccccccccc}
 &\color{red}(5^5,3)&\color{red}(7^5,3)&\color{red}(8^5,3)&(9^5,3)&(11^5,3)&(13^5,3)&(16^5,3)&(17^5,3)&(19^5,3)\\
&& \color{red}(7^7,4)&\color{red}(8^7,4)&\color{red}(9^7,4)&\color{red}(11^7,4)&(13^7,4)&(16^7,4)&(17^7,4)&(19^7,4)\\
 &&&  &\color{red}(9^9,5)&\color{red}(11^9,5)&\color{red}(13^9,5)&\color{red}(16^9,5)&(17^9,5)&(19^9,5)\\
 &&&&&\color{red}(11^{11},6)&\color{red}(13^{11},6)&\color{red}(16^{11},6)&\color{red}(17^{11},6)&\color{red}(19^{11},6)\\
 &&&&&& \color{red}(13^{13},7)&\color{red}(16^{13},7)&\color{red}(17^{13},7)&\color{red}(19^{13},7)\\
 &&& &&&&\color{red}(16^{15},8)& \color{red} (17^{15},8)&\color{red}(19^{15},8)\\
  & &&&&&& &\color{red}(17^{17},9)& \color{red}(19^{17},9)
        \end{array}$
\end{center}}
\noindent We should point out that those  marked in red  can't be obtained  from Lemma \ref{zhang} directly. Further, we have the following new family of multimagic squares.

\begin{cor}\label{thm-q2t-1}
There exists an MS$(q^{2t-1},t)$ for any prime power $2t-1 \leq q< 4t-3$ with $t\geq3$.
\end{cor}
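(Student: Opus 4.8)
The plan is to read off the corollary as an immediate consequence of Theorem~\ref{q2t-1}, simply by restricting the range of $q$. Theorem~\ref{q2t-1} asserts that an MS$(q^{2t-1},t)$ exists for \emph{every} prime power $q\ge 2t-1$ with $t\ge 3$; in particular it holds on the subrange $2t-1\le q<4t-3$. So the only thing to check is that the hypotheses of Theorem~\ref{q2t-1} are genuinely met here, which amounts to verifying that the interval $[2t-1,\,4t-3)$ is nonempty and contains at least one prime power, so that the statement is not vacuous.

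First I would observe that the lower endpoint $q=2t-1$ itself always satisfies $2t-1\le q<4t-3$ whenever $t\ge 3$, since then $2t-1<4t-3\iff 2<2t\iff t>1$. Thus $q=2t-1$ lies in the stated range for every $t\ge 3$; more generally every prime power in $[2t-1,4t-3)$ is covered. Now invoke Theorem~\ref{q2t-1}: for any such prime power $q$ we have $q\ge 2t-1$ and $t\ge 3$, so an MS$(q^{2t-1},t)$ exists. That is exactly the assertion of the corollary.

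It is perhaps worth pointing out the purpose of singling out this subrange, which is the content of the \textbf{Remark} above: when $q$ is small relative to $t$ — specifically when $q<4t-3$ — the exponent comparison $2t-1$ versus $t$ is such that the square MS$(q^{2t-1},t)$ cannot be produced by applying Lemma~\ref{zhang} alone (one would need $q\ge 2(2t-1)-1=4t-3$ to get MS$(q^{2t-1},2t-1)$, hence a fortiori MS$(q^{2t-1},t)$, directly). So the corollary isolates precisely the genuinely new instances, those obtained via the CMS route through Construction~\ref{conmscms} rather than from Lemma~\ref{zhang}.

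There is essentially no obstacle here: the corollary is a specialization of the theorem, and the only (trivial) verification is the nonemptiness of the index range, handled by the inequality $2t-1<4t-3$ for $t\ge 3$ noted above. Accordingly the proof is one line: apply Theorem~\ref{q2t-1} to any prime power $q$ with $2t-1\le q<4t-3$.
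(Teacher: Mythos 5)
Your proposal is correct and matches the paper's own argument: existence on the stated range is an immediate specialization of Theorem~\ref{q2t-1}, and the role of the bound $q<4t-3$ is exactly the comparison you make with Lemma~\ref{zhang} (which would only yield MS$(q^{2t-1},t)$ via MS$(q^{2t-1},2t-1)$ for $q\ge 4t-3$). Nothing is missing.
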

\begin{proof}
For a $(2t-1)$-multimagic square, Lemma \ref{zhang} provides an  MS$(q^{2(2t-1)-1},2t-1)$ for any prime power $q\geq 2(2t-1)-1$ with $2t-1\geq2$, i.e.,
an MS$(q^{4t-3},2t-1)$ for any prime power $q\geq 4t-3$ with $t\geq2$. But Theorem \ref{q2t-1}   provides an  MS$(q^{2t-1},2t-1)$ for any prime power $q\geq 2t-1$ with $t\geq3$. So the MS$(q^{2t-1},t)$s for   prime power $2t-1 \leq q< 4t-3$ with $t\geq3$ are covered by Theorem \ref{q2t-1} but are not covered by Lemma \ref{zhang}.
\end{proof}

For fixed $q$ and fixed $t$, those multimagic squares come from Lemma \ref{zhang} and Theorem  \ref{q2t-1} are both single case, but the following provided a family ones.
\begin{cor}\label{main-mst}
There exists an MS$(q^{m},t)$ for  any prime power  $q$  with $q\geq 4t-5$ and  $m\geq t\geq3$.
\end{cor}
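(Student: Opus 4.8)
The plan is to combine the two single-case results already available --- Lemma \ref{zhang} and Theorem \ref{q2t-1} --- with the product construction (Construction \ref{conproduct}) to generate an entire family of exponents. First I would observe that, for a fixed prime power $q$ with $q \geq 4t-5$ and a fixed $t \geq 3$, we have access to two "base" multimagic squares of order a power of $q$: an MS$(q^t,t)$ from Lemma \ref{zhang} (which applies since $q \geq 4t-5 \geq 2t-1$ for $t \geq 2$), and an MS$(q^{2t-1},t)$ from Theorem \ref{q2t-1} (which applies since $q \geq 4t-5 \geq 2t-1$ for $t \geq 3$). Thus we have an MS$(q^a,t)$ for $a \in \{t, 2t-1\}$.

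Next I would apply Construction \ref{conproduct} repeatedly. Since MS$(m,t)$ and MS$(n,t)$ yield MS$(mn,t)$, starting from the two base cases $a=t$ and $a=2t-1$ we obtain an MS$(q^m,t)$ for every $m$ of the form $m = \alpha t + \beta(2t-1)$ with $\alpha,\beta$ nonnegative integers not both zero. So the heart of the argument is the elementary number-theoretic claim: for $t \geq 3$, every integer $m \geq t$ can be written as $\alpha t + \beta(2t-1)$ with $\alpha,\beta \geq 0$. I would verify this by the standard Chicken McNugget / Frobenius-type reasoning. Note $\gcd(t, 2t-1) = \gcd(t, t-1) = 1$, so all sufficiently large $m$ are representable; the task is to check that the threshold is low enough, namely that every $m \geq t$ works. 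The key observation is that the $t$ consecutive values $m = t, t+1, \dots, 2t-1$ are all representable: indeed $m = (2t-1) + (m - (2t-1))\cdot$... more cleanly, for $m$ in the range $t \le m \le 2t-1$ write $m = (m-t)\cdot 1 + \dots$; actually the clean statement is that $\{t, 2t, 3t, \dots\}$ together with $\{(2t-1), (2t-1)+t, (2t-1)+2t, \dots\}$ and higher combinations cover everything from $t$ onward because once we have any $t$ consecutive representable integers we may add $t$ to bootstrap all larger ones. Checking that $t, t+1, \ldots, 2t-1$ are representable: $t = 1\cdot t$; and for $t+1 \le m \le 2t-1$ we have $2t-1 \ge m$ but also $m \ge t+1 > 2t-1$ fails for $t\ge 2$... so instead I would argue directly that $m - (2t-1) \ge t - (2t-1) = 1-t$ may be negative, hence for small $m$ we use multiples of $t$ plus copies of $(2t-1)$ more carefully: every residue class mod $t$ is hit by $\beta(2t-1) \equiv -\beta \pmod t$ as $\beta$ ranges over $0,1,\dots,t-1$, and the smallest representative in class $-\beta$ is $\beta(2t-1)$, whose value is at most $(t-1)(2t-1)$. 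This shows representability for all $m \ge (t-1)(2t-1)$; to push the threshold down to $m \ge t$ I would instead just exhibit representations directly for $t \le m < 2t$, which forces $\beta \le 1$: if $\beta = 0$ then $m = \alpha t$ needs $t \mid m$, if $\beta = 1$ then $m = 2t-1$. So the range $t \le m < 2t$ is NOT fully covered by products alone, and I must be more careful about the claimed statement.

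Reexamining: the corollary as stated claims MS$(q^m,t)$ for all $m \geq t$, but the product construction from exponents $\{t, 2t-1\}$ only gives the numerical semigroup generated by $t$ and $2t-1$, which does not contain, e.g., $t+1$. Therefore I expect the actual proof to also invoke Lemma \ref{zhang} in its full strength: Lemma \ref{zhang} gives MS$(q^t, t)$ but applied with the *same* $q$ and varying... no. The resolution must be that Construction \ref{conmscms} or Theorem \ref{q2t-1}'s method yields additional exponents, or that one uses MS$(q,t')$-type ingredients. The cleanest fix, and the one I would pursue: Lemma \ref{zhang} also provides an MS$(q^{t}, t)$ and, crucially, we have the trivial MS$(q,1)$... no, we need $t$-multimagic. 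The honest plan: the semigroup $\langle t, 2t-1 \rangle$ has Frobenius number $(t-1)(2t-2)-1$, so it contains every $m \geq (t-1)(2t-2)$; for the finitely many $m$ with $t \le m < (t-1)(2t-2)$ not in the semigroup, one needs a separate argument, and \textbf{this gap-filling is the main obstacle}. I would either (a) note that Theorem \ref{q2t-1} can be iterated (an MS$(q^{2t-1},t)$ is in particular built from an MS$(q^{t-1},t-1)$-CMS, and by varying which CMS one plugs in one may reach more exponents), or (b) suspect the corollary should read $m$ in the numerical semigroup generated by $t$ and $2t-1$, or all sufficiently large $m$. Given the constraints, I would write the proof to establish the semigroup statement rigorously via Construction \ref{conproduct} and the $\gcd(t,2t-1)=1$ Frobenius bound, then remark that this in particular yields an infinite family, flagging that the precise low-$m$ cases follow from the same bootstrapping once one checks $t$ consecutive representable values exist (which, combined with Lemma \ref{zhang}'s MS$(q^t,t)$ used as the "$+t$" step, handles all large $m$).
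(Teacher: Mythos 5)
Your proposal has a genuine gap, and you essentially flag it yourself but never resolve it. You correctly observe that products of the two base orders $q^{t}$ and $q^{2t-1}$ only realize exponents in the numerical semigroup $\langle t,\,2t-1\rangle$, which misses $t+1,\dots,2t-2$ (and many larger values), so the stated corollary cannot be proved this way; but then you drift toward either weakening the statement or an unsubstantiated hope that Theorem \ref{q2t-1} can be ``iterated.'' The missing idea --- and the reason the hypothesis is $q\geq 4t-5$ rather than just $q\geq 2t-1$ --- is to apply Lemma \ref{zhang} with a \emph{larger} multimagic degree and then downgrade: for each $0\leq k\leq t-2$, the condition $q\geq 4t-5=2\bigl(t+(t-2)\bigr)-1\geq 2(t+k)-1$ lets Lemma \ref{zhang} produce an MS$(q^{t+k},\,t+k)$, and any $(t+k)$-multimagic square is in particular a $t$-multimagic square, so one gets an MS$(q^{t+k},t)$ for all these $k$. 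Together with the MS$(q^{2t-1},t)$ from Theorem \ref{q2t-1} (valid since $q\geq 4t-5\geq 2t-1$), this yields base squares for the $t$ consecutive exponents $t,\,t+1,\,\dots,\,2t-1$.

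With $t$ consecutive generators starting at $t$, the numerical semigroup they generate contains every integer $m\geq t$: the exponents $t\leq m\leq 2t-1$ are the generators themselves, and every $m\geq 2t$ can be written as $m=\sum_{i=0}^{t-1}(t+i)n_i$ with nonnegative integers $n_i$ (the intervals $[kt,\,k(2t-1)]$ overlap for $k\geq 1$). Construction \ref{conproduct} then gives an MS$(q^{m},t)$ for every $m\geq t$, which is exactly the paper's argument. Your Frobenius-type analysis of $\langle t,2t-1\rangle$ is correct as far as it goes, but without the downgrading step it cannot reach the full statement, and your proposal as written does not prove the corollary.
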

\begin{proof}
Let $t\geq3$ and let $m=t+k, k\geq0$. For $0\leq k\leq t-2$ and $q\geq 2(t+k)-1$, by Lemma \ref{zhang} there exists an MS$(q^{t+k},t+k)$  and hence there exists an MS$(q^{t+k},t)$. For $k=t-1$, there exists an MS$(q^{t+k},t)$  when $q\geq2t-1$ by Theorem \ref{q2t-1}. For any $m\ge 2t$, we can write $m=\sum_{i=0}^{t-1}(t+i)n_i$, where $n_0,n_1,\dots,n_{t-1}$ are nonnegative integers. By Construction \ref{conproduct}, we get an MS$(q^{m},t)$. The proof is completed.
\end{proof}

\noindent {\bf Open problem} Find more MS$(q^{t},t)$  and  MS$(q^{2t-1},t)$ for prime power $q<2t-1$  with $t\geq3$.

\vskip10pt


\end{document}